\theoremstyle{plain}
\newtheorem{theorem}{Theorem}[section]
\newtheorem{corollary}[theorem]{Corollary}
\newtheorem{lemma}[theorem]{Lemma}
\theoremstyle{definition}
\newtheorem{example}[theorem]{Example}
\newtheorem*{notation}{Notation}
\newtheorem*{ack}{Acknowledgement}
\numberwithin{equation}{section}
\newcommand{\real}{{\mathds R}}
\newcommand{\nat}{{\mathds N}}
\newcommand{\Ee}{\mathds{E}}
\newcommand{\Pp}{\mathds{P}}
\newcommand{\I}{\mathds{1}}
\newcommand{\Wa}{\mathds{W}}
\newcommand{\dup}{\mathrm{d}}
\newcommand{\eup}{\mathrm{e}}
\newcommand{\Ical}{\mathcal{I}}
\newcommand{\Jcal}{\mathcal{J}}
\newcommand{\Kcal}{\mathcal{K}}
\newcommand{\Lcal}{\mathcal{L}}
\newcommand{\Lscr}{\mathscr{L}}
\newcommand{\Normal}{\mathcal{N}}
\newcommand{\Pcal}{\mathcal{P}}
\newcommand{\Pscr}{\mathscr{P}}
\newcommand{\Tcal}{\mathcal{T}}
\DeclareMathOperator{\Lip}{Lip}
\DeclareMathOperator{\Hess}{Hess}
\DeclareMathOperator{\supp}{supp}
\DeclareMathOperator{\vol}{vol}
\DeclareMathOperator{\Var}{Var}
\newcommand{\entier}[1]{\lfloor #1\rfloor}
\newcommand{\les}{\lesssim}
\begin{document}
\allowdisplaybreaks

\title{\bfseries Convergence rates for
the $p$-Wasserstein distance of the empirical measures of an ergodic Markov process
}

\author{
	Ren\'{e} L. Schilling
	\and
	Jian Wang
	\and
	Bingyao Wu
	\and
	Jie-Xiang Zhu
}

\maketitle

\begin{abstract}
Let $X:=(X_t)_{t\geq 0}$ be an ergodic Markov process on $\real^d$, and $p>0$. We derive upper bounds of the $p$-Wasserstein distance
between the invariant measure and the empirical measures of the Markov process $X$. For this we assume, e.g.\ that the transition semigroup of $X$ is
exponentially contractive in terms of the $1$-Wasserstein distance, or that the iterated Poincar\'e inequality holds together with certain moment conditions on the invariant measure. Typical examples include diffusions and underdamped Langevin dynamics.

\medskip\noindent
{\bf 2020 MSC:}  60F15; 60F25; 60G57; 60J60.

\smallskip\noindent
{\bf Keywords:}  Empirical measure, Markov process, Wasserstein distance, convergence rate, underdamped Langevin dynamics.
\end{abstract}

\section{Introduction and main results}\label{sec-intro}

It is an interesting and fundamental problem in probability theory, (random) dynamical systems and numerical analysis to study the quantitative behaviour of the convergence of empirical measures in
the Wasserstein distance.

To make this precise, let us state this problem in an Euclidean setting. By $\Pscr(\real^d)$ we denote the set of all probability measures on $\real^d$, and $\mathscr{C}(\nu_0,\nu_1)\subset\Pscr(\real^d\times\real^d)$ denotes the set of couplings with marginals $\nu_0,\nu_1\in\Pscr(\real^d)$.
For $\nu_0,\nu_1 \in\Pscr(\real^d)$ and $c(x, y) := |x-y|^p$ with $x,y\in\real^d$ and $p\in (0,\infty)$, we define the \emph{optimal transport cost} as
\begin{gather*}
	\Tcal_p(\nu_0,\nu_1)
	:= \inf_{\pi\in\mathscr{C}(\nu_0,\nu_1)} \int_{\real^d\times\real^d}|x-y|^p \, \pi(\dup x,\dup y).
\end{gather*}
The ($p$-)\emph{Wasserstein \textup{(}Kantorovich\textup{)} distance} is defined as $\Wa_p(\nu_0, \nu_1) := \left[\Tcal_p(\nu_0,\nu_1)\right]^{1\wedge\frac 1p}$, see Villani \cite[Theorem 7.3]{V-2003}.

Let $(X_t)_{t\ge 0}$ be an ergodic temporally homogeneous Markov process on $\real^d$ with invariant probability measure $\mu$, and let $(P_t)_{t\ge 0}$ be its transition semigroup, i.e.\
\begin{gather*}
	P_t f (x)
	:= \Ee^x \left[ f(X_t) \right], \quad t \ge 0, \; x \in \real^d, \; f \in \mathscr{B}_b(\real^d),
\end{gather*}
where $\Ee^x$ denotes the expectation if $X_0 = x$. We write $L^p(\mu) := L^p(\real^d, \mu)$ for $1 \le p \le \infty$ with norm $\| \cdot \|_{L^p(\mu)}$. Let $\Lcal$ be the infinitesimal generator of $(P_t)_{t \ge 0}$ on $L^2(\mu)$ with domain $\mathscr D(\Lcal)$. Consider the empirical measure
\begin{gather*}
	\mu_T :=  \frac1T \int_0^T \delta_{X_t}\,\dup t, \quad T > 0,
\end{gather*}
where $\delta_{X_t}$ is the Dirac measure at $X_t$. One main problem is to establish convergence rates for $\Tcal_p(\mu_T, \mu)$ both in expectation and almost surely as $T \to \infty$.

Recently, there has been considerable progress in this problem. Following the ideas of \cite{AMB,L17}, several works \cite{TWZ, W22, WWZ} obtain bounds for $\Ee\left[\Tcal_p(\mu_T, \mu)\right]$ by regularizing the empirical measure $\mu_T$ by a symmetric Markov semigroup (which is related to the original semigroup) and comparing the Wasserstein distance with negative Sobolev norms.  This so-called PDE approach is particularly effective if $p = 2$. On the downside, this approach requires that the underlying distance is induced by a symmetric semigroup associated with the invariant measure, as well as a spectral gap -- and both conditions fail for typical degenerate models, e.g.\ the underdamped Langevin dynamics. Recall that this is the process $(X_t)_{t\ge0}: = (Y_t, Z_t)_{t\ge0} \in \real^n \times \real^n$, which is given by the following  stochastic differential equation (SDE):
\begin{gather} \label{Langevin0}
	\begin{cases}
		\dup Y_t = Z_t \, \dup t,\\
		\dup Z_t =- \big( Z_t + \nabla V(Y_t)  \big) \, \dup t + \sqrt{2 } \, \dup W_t,\\
	\end{cases}
\end{gather}
where $V \in C^2(\real^n)$ is a confining potential, and $(W_t)_{t\ge0}$ is a standard $n$-dimensional Brownian motion. The generator of this process $(X_t)_{t\ge0}$ is not uniformly elliptic. The  recent  work \cite{W25} further develops the PDE approach and obtains quantitative bounds on $\Ee \left[ \Tcal_2(\mu_T, \mu) \right]$.

In this paper we use a method that differs from the approach in \cite{W25}. Our technique is related to the strategy in \cite{FG}, which was useful for related questions in an i.i.d.\ setting. For the underdamped Langevin dynamics, an important observation is that, under suitable assumptions on
the potential
$V$, the process $(X_t)_{t\ge0}$ defined by \eqref{Langevin0} satisfies the iterated Poincar\'e inequality, which enables us to use a Bernstein-type inequality. These ingredients yield quantitative bounds for $\Tcal_p(\mu_T,\mu)$, both in expectation and almost surely, for the underdamped Langevin dynamics. Our results do not only improve the bound obtained in \cite{W25} for $p=2$, but also extend it to all $p>0$; moreover, our approach gives new almost sure bounds; see Section~\ref{Lan}.

In fact, our arguments are valid for
 general
 Markov processes that satisfy one of the following conditions: (i) the exponential contractivity condition in the $1$-Wasserstein distance, (ii) the iterated Poincar\'e inequality, (iii) the $L^2(\mu)$-coercivity. Within this setting, we improve and extend several results from \cite{CP,W22}. Besides the key lemma from \cite{FG}, our proof relies on a smoothing procedure via compactly supported smooth densities (cf.\ \cite{CP, DJL}) and on Bernstein-type inequalities (cf.\ \cite{TWZ, WWZ} for the compact setting). A new feature of our method is that it avoids any direct use of estimates involving the transition density $p_t(x, y)$ of the semigroup $(P_t)_{t\ge0}$.

\begin{notation}
Most of our notation is standard or should be clear from the context. We use $\mathscr{L}_{X}$ for the law of the random variable $X$, and we write $\Pp^\nu$ and $\Ee^\nu$ for the probability and expectation corresponding to the initial law $\nu$ respectively.  We also write $X\sim\mu$ if $\Lscr_X = \mu$.  By $\mu(f)$ we denote the integral $\int f\,\dup\mu$; in particular, $\mu(|\cdot|^q):=\int_{\real^d} |x|^q \mu(\dup x)$ is the $q$-th moment of $\mu$. We write $\I_A$ for the indicator function of a set or event $A$.

The shorthand $A \les B$ means that there is a constant $C> 0$, depending only on the parameters in the assumptions, such that $A \le C\cdot B$; if both $A \les B$ and $B \les A$ hold, we write $A \simeq B$;  $\mathds{I}_{d\times d}$ denotes the $d$-dimensional identity matrix. For any $a>0$, $\entier{a}$ denotes its integer part;
$\langle\cdot,\cdot\rangle_{L^2(\mu)}$ denotes the inner product in $L^2(\mu)$.
 Finally,  $a\wedge b$ stands for the  minimum of $a,b\in\real$, and $a_+ := \max\{a, 0\}$.
\end{notation}

Now, we introduce some assumptions and our main results.

\begin{enumerate}
\item[\textbf{(H1)}] (Exponential contractivity in the $1$-Wasserstein distance)
	There exist constants $C \ge 1$ and $\lambda_{\mathrm{E}}>0$ such that for any $\nu\in\Pscr(\real^d)$  and $t\ge0$,
	\begin{gather*}
		\Wa_1(\nu P_t, \mu P_t) \le   C \eup^{-\lambda_{\mathrm{E}} t} \, \Wa_1(\nu,\mu).
	\end{gather*}

\item[\textbf{(H2)}] (Iterated Poincar\'e inequality)
	There exists a constant $\lambda_{\mathrm{I}}> 0$ such that for any $f \in \mathscr D(\Lcal)$ with $\mu(f)  = 0$,
	\begin{gather*}
		\| \Lcal f  \|_{L^2(\mu)} \ge \lambda_{\mathrm{I}} \, \| f \|_{L^2(\mu)}.
	\end{gather*}

\item[\textbf{(H3)}] ($L^2(\mu)$-coercivity)
	There exists a constant $\lambda_{\mathrm{C}} > 0$ such that for any $f \in \mathscr D(\Lcal)$ with $\mu(f)  = 0$,
	\begin{gather*}
		\langle -\Lcal f , f     \rangle_{L^2(\mu)} \ge \lambda_{\mathrm{C}} \, \| f \|_{L^2(\mu)}^2.
	\end{gather*}
\end{enumerate}
If $\Lcal$ is symmetric, then \textbf{(H3)} is also known as the \emph{Poincar\'e inequality} or the \emph{spectral gap inequality}. In Section~\ref{subsec-more} below, we will discuss the relationships among the assumptions \textbf{(H1)}--\textbf{(H3)}, and present some typical examples.

We will now state our main results on $\Ee^\mu [\Tcal_p(\mu_T, \mu)]$.
\begin{theorem}\label{D}
	Let $(X_t)_{t\geq 0}$ be an ergodic Markov process on $\real^d$ with invariant measure $\mu$.  Assume that \textbf{\upshape(H1)} holds, let $p>0$, and assume that  $\mu$ has for some $q> \max\{p, 1\}$ a $q$-th moment
		$\mu(|\cdot|^q)
<\infty.$
	Define
	\begin{gather*}
		\zeta := \zeta (p, q, d) := \max \left\{ \frac{q}{q - p}, \frac{d}{p} \right\} \in (1, \infty).
	\end{gather*}
	Then  there exists a constant $C>0$ such that  for all $T \ge 2$,
	\begin{gather}\label{D-W}\begin{aligned}
		\Ee^\mu\left[\Tcal_p (\mu_T,\mu)\right]
		\leq
		 C\,
		T^{-\frac{p}{2\zeta p + 1}}&\left( (\log T)^{\frac{2d}{2d+1}} \I_{\left\{q = \frac{dp}{d - p} \right\}} + \I_{\left\{q \ne \frac{dp}{d - p}  \right\}} \right)  \left( (\log T)^{\frac{2q}{2q + (q/d) - 1}} \I_{\{ p = d  \}} + \I_{\{ p \ne d  \}}   \right).
	\end{aligned}\end{gather}
\end{theorem}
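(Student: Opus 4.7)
I would follow the blueprint outlined in the introduction: smooth both measures at scale $\varepsilon$, decompose the smoothed distance dyadically \emph{à la} Fournier--Guillin, estimate each piece by a Bernstein-type deviation bound obtained from \textbf{(H1)}, and finally optimize the scales. Fix $\phi\in C_c^\infty(\real^d)$ with $\phi\ge 0$, $\int\phi=1$ and $\supp\phi\subset B(0,1)$; set $\phi_\varepsilon(x)=\varepsilon^{-d}\phi(x/\varepsilon)$, $\mu^\varepsilon:=\mu*\phi_\varepsilon$ and $\mu_T^\varepsilon:=\mu_T*\phi_\varepsilon$. Coupling each point with an independent $\phi_\varepsilon$-distributed shift gives $\Tcal_p(\nu,\nu*\phi_\varepsilon)\le \varepsilon^p$ for every probability measure $\nu$, and the (quasi-)metric properties of $\Tcal_p^{1\wedge 1/p}$ reduce the problem to bounding $\Ee^\mu[\Tcal_p(\mu_T^\varepsilon,\mu^\varepsilon)]$ modulo an additive $\varepsilon^p$.

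Next I would truncate at a radius $R\ge 1$ and apply an FG-type partition lemma inside $B(0,R)$: dyadically refine into cubes $\Pcal_l=\{A_{l,i}\}$ of side $R\,2^{-l}$, $l=0,1,\dots,L$, with $R\,2^{-L}\simeq \varepsilon$ (no gain is expected below the smoothing scale). This gives
\begin{equation*}
\Tcal_p(\mu_T^\varepsilon,\mu^\varepsilon)\les R^p\bigl(\mu_T(B(0,R)^c)+\mu(B(0,R)^c)\bigr)+\varepsilon^p+\sum_{l=0}^L (R\,2^{-l})^p\sum_{A\in\Pcal_l}\bigl|\mu_T^\varepsilon(A)-\mu^\varepsilon(A)\bigr|.
\end{equation*}
Stationarity of $\mu_T$ under $\Pp^\mu$ and the Markov inequality $\mu(B(0,R)^c)\le R^{-q}\mu(|\cdot|^q)$ control the expected tail by $\les R^{p-q}$.

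For each cube, the test function $f_A:=\I_A*\phi_\varepsilon$ is bounded by $1$ and Lipschitz with constant $\les\varepsilon^{-1}$, and $\mu_T^\varepsilon(A)-\mu^\varepsilon(A)=\tfrac{1}{T}\int_0^T(f_A(X_s)-\mu(f_A))\,\dup s$. Kantorovich duality applied to \textbf{(H1)} gives $|P_tf(x)-\mu(f)|\le C\eup^{-\lambda_{\mathrm E}t}\|f\|_{\Lip}\,\Wa_1(\delta_x,\mu)$, which together with $\mu(|\cdot|)<\infty$ and the stationary covariance identity yields a Bernstein-type concentration bound
\begin{equation*}
\Pp^\mu\Bigl(\Bigl|\tfrac{1}{T}\int_0^T(f-\mu(f))(X_s)\,\dup s\Bigr|>r\Bigr)\les \exp\Bigl(-\frac{cTr^2}{\|f\|_\infty\|f\|_{\Lip}+\|f\|_\infty r}\Bigr).
\end{equation*}
Integrating this tail gives $\Ee^\mu|\mu_T^\varepsilon(A)-\mu^\varepsilon(A)|\les (T\varepsilon)^{-1/2}$ per cube; combining with the trivial bound $\le 2\mu^\varepsilon(A)$ and applying Cauchy--Schwarz (or a threshold split according to whether $\mu^\varepsilon(A)$ exceeds the Bernstein scale) produces a controllable estimate for $\sum_{A\in\Pcal_l}\Ee^\mu|\mu_T^\varepsilon(A)-\mu^\varepsilon(A)|$ depending only on $R$, $\varepsilon$, $T$, $l$ and $d$.

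Collecting the three sources of error leaves an inequality of the form $\varepsilon^p+R^{p-q}+F(\varepsilon,R,L,T,d,p)$ with $F$ driven by the geometric sum $\sum_{l=0}^L 2^{l(d-p)}$, and I would then optimize in $\varepsilon$, $R$ and $L$. This sum is convergent, critical, or divergent according to the sign of $d-p$, which produces the logarithmic factor at the critical threshold $p=d$. Balancing $\varepsilon^p$ against the main stochastic term and then against the tail $R^{p-q}$ splits into two regimes corresponding to the two values of $\zeta=\max\{q/(q-p),d/p\}$, with the transition at $q=dp/(d-p)$ creating the remaining logarithmic correction in \eqref{D-W}. The main technical obstacle is the Bernstein inequality used above: since \textbf{(H1)} only provides a $\Wa_1$-contraction (and not an $L^2$-spectral gap), the variance proxy must be built by hand from the Lipschitz-duality estimate, and the correct simultaneous dependence on $\|f\|_\infty$ and $\|f\|_{\Lip}$ of the sub-Gaussian and sub-exponential parameters is precisely what allows the subsequent parameter balancing to yield the exponent $-p/(2\zeta p+1)$.
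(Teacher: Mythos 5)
Your overall strategy (mollify at scale $\varepsilon$, invoke a Fournier--Guillin decomposition, control each cell by a concentration estimate derived from \textbf{(H1)}, then optimize) is the same as the paper's, and several ingredients -- the $\varepsilon^p$ smoothing cost, the Lipschitz control of $f_{A,\epsilon}$, the use of $\mu(|\cdot|^q)<\infty$ to kill the tails -- are identified correctly. There are, however, two genuine gaps, one of which is fatal as written.

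First, and most seriously, your version of the FG lemma truncates at a single radius $R$ and dyadically refines $(-R,R]^d$ into cubes of side $R\,2^{-l}$, $l=0,\dots,L$, paying the tail cost only once through $R^{p-q}$. The paper's Lemma~\ref{Wp} is the annulus version of the FG inequality: it decomposes $\real^d$ into dyadic annuli $B_n$, weights each annulus by $2^{pn}$, and -- crucially -- exploits the $q$-moment tail $\mu(B_n)\lesssim 2^{-qn}$ at \emph{every} annular scale $n$ simultaneously, not just at the outermost cutoff. If you carry your bound $\sum_{A\in\Pcal_l}\min\{\mu^\varepsilon(A),(T\varepsilon)^{-1/2}\}\le\min\{1,2^{dl}(T\varepsilon)^{-1/2}\}$ through the optimization in $\varepsilon,R$, you obtain, for $p<d$, the rate $T^{-p(q-p)/((2d+1)q-p)}$, which is strictly weaker than the target $T^{-p/(2\zeta p+1)}$ for every finite $q$ (the two agree only in the limit $q\to\infty$). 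In particular the crude treatment $\sum_A\mu^\varepsilon(A)\le 1$ does not split into the two regimes of $\zeta=\max\{q/(q-p),d/p\}$ that you assert at the end; that dichotomy only appears once the annular tail decay $2^{-qn}$ is kept inside the decomposition. One can repair this by regrouping the cubes of level $l$ according to their distance $\sim 2^n$ from the origin and bounding $\sum_{A\subset B_n}\mu^\varepsilon(A)\lesssim 2^{-qn}$, but doing so is precisely reconstructing the annulus lemma; as stated, your decomposition does not deliver \eqref{D-W}.

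Second, the Bernstein-type exponential tail bound you invoke under \textbf{(H1)} alone is not available, and the paper does not use one. Exponential concentration for $\frac1T\int_0^T f(X_s)\,\dup s$ in this paper is established only under \textbf{(H2)} or \textbf{(H3)} (Theorem~\ref{tail}); the $\Wa_1$-contraction in \textbf{(H1)} gives you decay of $\|P_tf-\mu(f)\|_{L^q(\mu)}$ for Lipschitz $f$, and via the stationary covariance identity this yields a \emph{second}-moment bound $\Ee^\mu\bigl[|\frac1T\int_0^T f_{A,\epsilon}(X_s)\,\dup s|^2\bigr]\lesssim [f_{A,\epsilon}]_{\Lip}/T$, but it does not control higher moments or the moment generating function, so no sub-exponential tail follows. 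This gap is repairable, because you only ever use the integrated first moment: replacing your Bernstein step by the covariance identity followed by Cauchy--Schwarz reproduces exactly the per-cell estimate $\lesssim(T\varepsilon)^{-1/2}$ that you want, which is the route the paper takes. Finally, note that your crude Lipschitz bound $[f_{A,\epsilon}]_{\Lip}\lesssim\varepsilon^{-1}$ discards the $\vol(A)$-dependent refinement \eqref{lips}; since you cap the refinement at the smoothing scale this does not change the power of $T$, but it is what produces the exact logarithmic corrections at the critical thresholds $p=d$ and $q=dp/(d-p)$.
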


\begin{theorem}\label{IP}
	Let $(X_t)_{t\geq 0}$ be an ergodic Markov process on $\real^d$ with invariant measure $\mu$.  Assume that \textbf{\upshape(H2)} holds, let $p>0$, and assume that $\mu$ has for some $q> p$ a $q$-th moment $\mu(|\cdot|^q)  < \infty$. Define
	\begin{gather*}
		\gamma_1  := \gamma_1(p, q, d) := \max \left\{ \frac14, 1 - \frac{p}{d}  \right\} \in (0, 1).
	\end{gather*}
	Then,  there exists a constant $C>0$ such that  for all $T \ge 2$,
	\begin{align*}
		\Ee^\mu\left[\Tcal_p (\mu_T,\mu)\right]
		\leq
		 C\,  T^{-\frac{2}{3} \left(1 - \max\left\{\gamma_1, \, \frac{p}{q}\right\} \right)}&
		\left(\I_{\{\gamma_1 q = p \}} \log T +   \I_{\{\gamma_1 q \ne p \}} \right)\left(\I_{\left\{ p = \frac34 d \right\}} \log T  +   \I_{\left\{ p \ne \frac34 d \right\}} \right).
	\end{align*}
	In particular, if  $q > 4p$, then
	\begin{gather}\label{IP1}
		\Ee^\mu\left[\Tcal_p (\mu_T,\mu)\right]
		\leq
		\begin{cases}
		 C T^{-\frac 1 2}			& \text{if\ \ } p > \frac34 d,\\
		 C T^{-\frac 1 2}\log T	& \text{if\ \ } p = \frac34 d,\\
		 C T^{-\frac {2p} {3d}}	& \text{if\ \ } p < \frac34 d.
		\end{cases}
	\end{gather}
\end{theorem}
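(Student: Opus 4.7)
The plan is to combine three ingredients flagged in the introduction: (i) truncation using the $q$-th moment of $\mu$; (ii) the Fournier--Guillin dyadic decomposition lemma \cite{FG}, with smoothing of indicators by compactly supported bumps so that \textbf{(H2)} can be applied; and (iii) a Bernstein-type variance bound for additive functionals of $X$ deduced from \textbf{(H2)}, in the spirit of \cite{TWZ,WWZ}. Throughout, $R\ge1$ is a free parameter to be optimized at the end.

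First, by stationarity of $\mu$ under $\Pp^\mu$,
\begin{gather*}
	\Ee^\mu\int_{|x|>R}|x|^p\,\mu_T(\dup x)=\mu\bigl(|\cdot|^p\,\I_{\{|\cdot|>R\}}\bigr)\le R^{p-q}\mu(|\cdot|^q),
\end{gather*}
and similarly for $\mu$ itself, which handles the contribution to $\Tcal_p(\mu_T,\mu)$ from $\real^d\setminus B_R$, where $B_R:=\{|x|\le R\}$. On $B_R$ the Fournier--Guillin key lemma reduces the problem to a dyadic sum,
\begin{gather*}
	\Ee^\mu\bigl[\Tcal_p(\mu_T,\mu)\bigr]\lesssim R^{p-q}+\sum_{n\ge 0}2^{-np}\sum_{Q\in\Pcal_n}\Ee^\mu\bigl|\mu_T(Q)-\mu(Q)\bigr|,
\end{gather*}
where $\Pcal_n$ partitions $B_R$ into cubes of side $2^{-n}$. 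To bring the indicators into the framework of \textbf{(H2)} each $\I_Q$ is replaced by a smooth compactly supported bump $\chi_Q$ on a slight enlargement $\tilde Q\supset Q$ with $\|\chi_Q\|_\infty\lesssim1$ and $\|\chi_Q\|_{L^2(\mu)}^2\lesssim\mu(\tilde Q)$.

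The core of the proof is a Bernstein-type variance bound extracted from \textbf{(H2)}: for bounded $f$,
\begin{gather*}
	\Ee^\mu\left|\frac{1}{T}\int_0^T f(X_s)\,\dup s-\mu(f)\right|^2\lesssim T^{-1}\,\|f\|_\infty\,\|f\|_{L^2(\mu)}.
\end{gather*}
The derivation uses \textbf{(H2)} to invert $\Lcal$ on centered $L^2(\mu)$, with $\|(-\Lcal)^{-1}f\|_{L^2(\mu)}\lesssim\|f\|_{L^2(\mu)}$, and then writes the time average via a resolvent--martingale decomposition whose cross terms are controlled by $\|f\|_\infty$. Applied to $\chi_Q-\mu(\chi_Q)$ together with the trivial bound $\mu_T(Q),\mu(Q)\le\mu(\tilde Q)$, Jensen yields
\begin{gather*}
	\Ee^\mu\bigl|\mu_T(Q)-\mu(Q)\bigr|\lesssim\min\bigl\{\mu(\tilde Q),\,T^{-1/2}\mu(\tilde Q)^{1/4}\bigr\}.
\end{gather*}

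The two bounds coincide at $\mu(\tilde Q)\simeq T^{-2/3}$, i.e., at scale $2^{-n^\ast}\simeq T^{-2/(3d)}$. On coarse scales ($n\le n^\ast$), H\"older gives $\sum_{Q\in\Pcal_n}\mu(\tilde Q)^{1/4}\lesssim (R^d 2^{nd})^{3/4}$; on fine scales ($n>n^\ast$) the trivial bound gives $\sum_Q\mu(\tilde Q)\lesssim 1$. Summing the resulting geometric series yields a bulk estimate driven by the coarsest scale when $p>\tfrac34 d$ (contributing $T^{-1/2}$ up to an $R$-dependent factor) and by the crossover scale when $p<\tfrac34 d$ (contributing $T^{-2p/(3d)}$), with a logarithmic factor at the threshold $p=\tfrac34 d$. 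Balancing against the tail $R^{p-q}$ produces the $p/q$ correction encoded in $\max\{\gamma_1,p/q\}$, with a further logarithmic factor at $\gamma_1 q=p$; the specialisation $q>4p$ makes the $p/q$ term inactive and collapses the bound to \eqref{IP1}. The main obstacle is the Bernstein inequality itself: because \textbf{(H2)} is asymmetric and does not imply a symmetric spectral gap, obtaining the clean mixed $L^\infty$--$L^2$ bound with the correct $T^{-1}$ prefactor is the delicate step; the remainder of the argument reduces to careful dyadic bookkeeping and parameter optimization.
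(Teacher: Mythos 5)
Your overall strategy matches the paper's: the Fournier--Guillin dyadic decomposition (Lemma~\ref{Wp}), a Bernstein-type moment bound for $\frac1T\int_0^T f_A(X_s)\,\dup s$ with $f_A := \I_A - \mu(A)$ coming from \textbf{(H2)}, and then dyadic bookkeeping plus optimisation against the moment tail. The paper carries this out in Section~\ref{subsec-proof-23}, combining \eqref{Wp-I} with Lemma~\ref{alpha} at $\alpha = \tfrac14$. Two remarks.

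First, smoothing the indicators $\I_Q$ by compactly supported bumps $\chi_Q$ is unnecessary under \textbf{(H2)}/\textbf{(H3)}. The Bernstein-type inequality in Theorem~\ref{tail}\eqref{Bern-IP} (from \cite[Theorem 3.4]{HL}) applies to \emph{any} bounded measurable $f$ with $\mu(f)=0$, so the paper works directly with $f_A$. Mollification is needed only in the proof of Theorem~\ref{D} under \textbf{(H1)}, where Lipschitz regularity of the test function is required to invoke $\Wa_1$-contractivity via Kantorovich duality.

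Second, and this is the genuine gap, your ``core'' mixed $L^\infty$--$L^2$ variance bound
\begin{gather*}
\Ee^\mu\left|\frac1T\int_0^T f(X_s)\,\dup s\right|^2 \lesssim T^{-1}\|f\|_\infty\|f\|_{L^2(\mu)}
\end{gather*}
is asserted, not proved, and the resolvent--martingale sketch does not deliver it. Setting $g := (-\Lcal)^{-1}f$ already presupposes surjectivity of $-\Lcal$ on the mean-zero subspace of $L^2(\mu)$, which \textbf{(H2)} (injectivity with closed range) does not by itself furnish. Even granting this, the Dynkin decomposition gives $\Ee^\mu[\langle M\rangle_T] = T\,\mu(\Gamma(g,g)) = -T\langle \Lcal g, g\rangle_{L^2(\mu)} = T\langle f, g\rangle_{L^2(\mu)} \le T\lambda_{\mathrm I}^{-1}\|f\|_{L^2(\mu)}^2$, while the boundary terms are controlled by $\|g\|_{L^2(\mu)} \le \lambda_{\mathrm I}^{-1}\|f\|_{L^2(\mu)}$; the quantity $\|f\|_\infty$ does not appear. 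This calculation, if it could be made rigorous, would produce the \emph{sharper} variance bound $\lesssim T^{-1}\|f\|_{L^2(\mu)}^2$, i.e.\ the $T^{-1/2}\mu(A)^{1/2}$ rate of Theorem~\ref{S}, not the $T^{-1/2}\mu(A)^{1/4}$ rate you need here; so your sketch is internally inconsistent with the bound you claim, and the conflict signals hidden regularity/domain obstructions. The correct engine under \textbf{(H2)} is the Bernstein \emph{tail} inequality of Huang--Li (Theorem~\ref{tail}\eqref{Bern-IP}), whose integrated form $\Ee^\mu|\cdot| \lesssim \sqrt M(\sqrt\sigma\,T^{-1/2} + \sqrt M\,T^{-1})$ with $M=\|f\|_\infty$, $\sigma^2 = \Var_\mu(f)$ is exactly what yields the $\mu(A)^{1/4}$ exponent. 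This is a non-trivial external ingredient that you must cite or re-derive. The remainder of your outline (crossover scale $\mu(Q)\simeq T^{-2/3}$, H\"older on the coarse scales, geometric summation, balance against the $R^{p-q}$ tail, and the collapse when $q>4p$) is correct and tracks the paper's Lemma~\ref{alpha}.
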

The Cauchy--Schwarz inequality shows that \textbf{\upshape(H3)} implies \textbf{\upshape(H2)}. Under the stronger assumption \textbf{\upshape(H3)}, we get better rates.

\begin{theorem}\label{S}
	Let $(X_t)_{t\geq 0}$ be an ergodic Markov process on $\real^d$ with invariant measure $\mu$.  Assume that \textbf{\upshape(H3)} holds, let $p>0$, and assume that $\mu$ has for some $q> p$ a $q$-th moment $\mu(|\cdot|^q)  < \infty$. Define
	\begin{gather*}
		\gamma_2  := \gamma_2(p, q, d) := \max \left\{ \frac12, 1 - \frac{p}{d}  \right\} \in (0, 1).
	\end{gather*}
	Then,  there exists a constant $C>0$ such that  for all $T \ge 2$,
	\begin{equation}\label{S-W}\begin{split}
		\Ee^\mu \left[\Tcal_p (\mu_T,\mu)\right]
		\leq
		 C T^{- \left(1 - \max\{\gamma_2, \, \frac{p}{q}\} \right)}
		&\left(\I_{\{\gamma_2 q = p \}} \log T  +   \I_{\{\gamma_2 q \ne p \}} \right)\left(\I_{\left\{ p = \frac12 d \right\}} \log T  +   \I_{\left\{ p \ne \frac12 d \right\}} \right).
	\end{split}\end{equation}
	In particular, if  $q>2p$, then
	\begin{gather}\label{S1}
	\Ee^\mu \left[ \Tcal_p(\mu_T,\mu)\right]
	\leq
	\begin{cases}
	 C T^{-\frac 1 2}		 & \text{if\ \ } p > \frac{d}{2},\\
	 C T^{-\frac 1 2}\log T& \text{if\ \ } p = \frac{d}{2},\\
	 C T^{-\frac p d}		 & \text{if\ \ } p < \frac{d}{2}.
	\end{cases}
	\end{gather}
\end{theorem}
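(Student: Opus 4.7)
The plan is to adapt the Fournier--Guillin strategy of \cite{FG} to our stationary ergodic setting. Four ingredients will be combined: (i) a truncation at a radius $R=R(T)\ge 1$ that reduces the problem to a bounded cube; (ii) the FG dyadic partition of $[-R,R]^d$; (iii) a per-cube variance bound for $\mu_T(Q) - \mu(Q)$ supplied by \textbf{(H3)}; and (iv) an optimisation over $R$ and over the dyadic cut-off scale.

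For (i), transporting the tails of $\mu_T$ and $\mu$ to a fixed point and optimally coupling the two restrictions on $B_R$ yields
\begin{gather*}
 \Tcal_p(\mu_T,\mu) \les \Tcal_p\!\left(\mu_T|_{B_R},\,\mu|_{B_R}\right) + \int_{|x|>R}|x|^p\,\dup\mu_T + \int_{|x|>R}|x|^p\,\dup\mu;
\end{gather*}
stationarity and the $q$-th moment hypothesis bound the expectation of the tail by $R^{p-q}\mu(|\cdot|^q)$. For (ii), the FG dyadic lemma on $[-R,R]^d$ gives
\begin{gather*}
 \Tcal_p\!\left(\mu_T|_{B_R},\,\mu|_{B_R}\right) \les R^p \sum_{n\ge 0} 2^{-np} \sum_{Q\in\Pscr_n}|\mu_T(Q)-\mu(Q)|,
\end{gather*}
where $\Pscr_n$ is the $n$-th dyadic refinement of $[-R,R]^d$ (containing $2^{nd}$ cubes). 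Following \cite{CP,DJL}, one may approximate each $\I_Q$ by a compactly supported smooth bump before applying the variance/Bernstein step; for the expected value this is only a technical convenience, but it is essential for the almost-sure statements. For (iii), Gr\"onwall's lemma applied to $\frac{\dup}{\dup t}\|P_t f\|_{L^2(\mu)}^2 = 2\langle \Lcal P_t f,P_t f\rangle_{L^2(\mu)} \le -2\lambda_{\mathrm{C}}\|P_t f\|_{L^2(\mu)}^2$ gives the exponential $L^2(\mu)$-decay $\|P_t f\|_{L^2(\mu)}\le \eup^{-\lambda_{\mathrm{C}} t}\|f\|_{L^2(\mu)}$ for $\mu(f)=0$; inserted in the stationary variance identity $\Var^\mu(\mu_T(f)) = (2/T^2)\int_0^T(T-t)\langle P_tf,f\rangle_{L^2(\mu)}\,\dup t$, this produces $\Var^\mu(\mu_T(f)) \le 2\|f-\mu(f)\|_{L^2(\mu)}^2/(\lambda_{\mathrm{C}} T)$. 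Taking $f=\I_Q$, applying Jensen and Cauchy--Schwarz over $Q\in\Pscr_n$, and combining with the trivial bound $\sum_Q|\mu_T(Q)-\mu(Q)|\le 2$, one obtains the per-scale estimate $\sum_{Q\in\Pscr_n}\Ee^\mu|\mu_T(Q)-\mu(Q)| \les \min\{1,\,2^{nd/2}T^{-1/2}\}$.

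Plugging this bound into the FG sum and splitting the dyadic series at the critical scale $2^{n_\star d}\simeq T$ gives
\begin{gather*}
 \Ee^\mu\Tcal_p\!\left(\mu_T|_{B_R},\,\mu|_{B_R}\right) \les R^p\left(T^{-1/2}\I_{\{p>d/2\}} + T^{-1/2}\log T\,\I_{\{p=d/2\}} + T^{-p/d}\I_{\{p<d/2\}}\right),
\end{gather*}
i.e.\ the interior bound decays as $R^p T^{-(1-\gamma_2)}$, up to a logarithm at the critical case $p=d/2$. The main obstacle is the final bookkeeping: adding the tail contribution $R^{p-q}\mu(|\cdot|^q)$, one must balance $R$ in each of the regimes $p\gtrless d/2$ so as to reproduce exactly the exponent $1-\max\{\gamma_2,\,p/q\}$ and both logarithmic corrections of \eqref{S-W} -- the first log coming from the dyadic saturation at $p=d/2$ and the second from the equality $\gamma_2 q=p$ at which the tail and interior contributions coincide. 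The simpler bound \eqref{S1} under $q>2p$ is then immediate on taking $R$ of order $1$, since the tail is automatically dominated.
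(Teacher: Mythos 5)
Your steps (iii) and (iv) are sound, and indeed slightly cleaner than the paper's route for the expectation bound: the Gr\"onwall argument under \textbf{(H3)} gives exponential $L^2(\mu)$-decay, the stationary variance identity then yields $\Ee^\mu|\mu_T(Q)-\mu(Q)|\les T^{-1/2}\mu(Q)^{1/2}$, and Cauchy--Schwarz over the cubes at scale $n$ gives $\min\{1,\,2^{nd/2}T^{-1/2}\}$. The paper obtains the same per-set estimate from the Bernstein inequality under \textbf{(H3)}, but for the expectation statement the variance route suffices.

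Your steps (i) and (ii), however, do not reproduce the theorem. Truncating at a single radius $R$ and applying a dyadic refinement of $[-R,R]^d$ gives, up to logarithms,
\begin{gather*}
\Ee^\mu\left[\Tcal_p(\mu_T,\mu)\right] \les R^p\,T^{-(1-\gamma_2)} + R^{p-q},
\end{gather*}
and optimising over $R$ yields the exponent $(1-\gamma_2)(q-p)/q$. This is \emph{strictly smaller} than the claimed exponent $1-\max\{\gamma_2,\,p/q\}$ for all admissible parameters: if $\gamma_2\ge p/q$ you lose a factor $1-p/q<1$, and if $\gamma_2<p/q$ you lose a factor $1-\gamma_2<1$. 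For example, with $d=4$, $p=2$, $q=10$ the theorem gives $T^{-1/2}\log T$ while your scheme yields only $T^{-2/5}$. The remark that for $q>2p$ one can ``take $R$ of order~$1$'' cannot be correct either: with bounded $R$ the tail $\int_{|x|>R}|x|^p\,\dup\mu$ is a fixed positive constant, so the overall bound does not tend to zero.

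The missing ingredient is the \emph{annular} structure of the Fournier--Guillin decomposition (Lemma~\ref{Wp}): the measures are cut into the rings $B_n$, each ring is refined independently, and the weight $2^{pn}$ is balanced against the mass decay $\mu(B_n)\les 2^{-qn}$ \emph{inside} the double sum over $(n,\ell)$. Concretely, H\"older at every refinement level $\ell$ gives $\sum_{F\in\Pcal_\ell}\mu(2^nF\cap B_n)^{1/2}\le 2^{d\ell/2}\mu(B_n)^{1/2}\les 2^{d\ell/2}\,2^{-qn/2}$, and the extra factor $2^{-qn/2}$ is exactly what lifts the exponent from $(1-\gamma_2)(1-p/q)$ to $1-\max\{\gamma_2, p/q\}$. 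A single truncation radius collapses all rings into one cube and discards this gain; no choice of $R$ recovers it. To repair your argument you would need to let the refinement depth depend on the ring index, which is precisely what Lemma~\ref{Wp} combined with the bookkeeping in Lemma~\ref{alpha} (with $\alpha=1/2$) accomplishes.
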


 Theorems~\ref{D}--\ref{S} characterize the behaviour in expectation.  We can also get almost sure upper bounds for $\Tcal_p(\mu_T,\mu)$:
\begin{theorem}\label{thm3}
	 Let $(X_t)_{t\geq 0}$ be an ergodic Markov process on $\real^d$ with invariant measure $\mu$. Assume that \textbf{\upshape(H2)} holds, let $p>0$, and assume that $\mu$ has for some $q> p$ a $q$-th moment $\mu(|\cdot|^q)  < \infty$. For the rate function
\begin{gather}\label{rate1}
	R_\eta(T) :=
	\begin{cases}
		T^{-\frac{p}{2(p+d)}} (\log T)^\eta 	& \text{if\ \ } p + d < \frac{q}{4}\\
		T^{-\frac{2p}{q}} (\log T)^{\frac32}    & \text{if\ \ } p + d = \frac{q}{4} \\
	T^{-\frac{2p(q-p)}{q(3p+4d)}} (\log T)^\eta & \text{if\ \ } p + d > \frac{q}{4}
	\end{cases} \qquad \text{with a fixed\ \ } \eta>1,
\end{gather}
	one has
	\begin{gather} \label{AS}
		\limsup_{T \to \infty} \frac{\Tcal_p(\mu_T,\mu)}{R_\eta(T)}  < \infty \quad \text{almost surely}.
	\end{gather}
	If instead of \textbf{\upshape(H2)} the stronger condition \textbf{\upshape(H3)} is assumed, we get the same a.s.\ result
\eqref{AS} with the following modified rate function in place of $R_\eta(T)$:
\begin{gather}\label{rate2}
	 \widetilde R_\eta(T) :=
	\begin{cases}
		T^{-\frac{p}{2(p+d)}} (\log T)^\eta & \text{if\ \ } p + d < \frac{q}{2}\\
		T^{-\frac{p}{q}} (\log T)^{\frac32}     & \text{if\ \ } p + d = \frac{q}{2} \\
		T^{-\frac{p(q-p)}{q(p +  2d)}} (\log T)^\eta  & \text{if\ \ } p + d > \frac{q}{2}
	\end{cases} \qquad \text{with a fixed\ \ } \eta>1.
	\end{gather}
\end{theorem}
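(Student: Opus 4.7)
The plan is to upgrade the expectation bounds of Theorems~\ref{IP}--\ref{S} to almost sure bounds. The main tool is a Bernstein-type deviation inequality for ergodic averages: under \textbf{(H2)} (respectively \textbf{(H3)}), for every bounded $f \in L^2(\mu)$ with $\mu(f) = 0$,
\[
	\Pp^\mu\left( \left| \frac 1 T \int_0^T f(X_s)\,\dup s \right| > r \right) \le 2\exp\left( -\frac{c T r^2}{\|f\|_{L^2(\mu)}^2 + \|f\|_\infty r/\lambda} \right),
\]
with $\lambda$ standing for $\lambda_{\mathrm I}$ or $\lambda_{\mathrm C}$ respectively. Such inequalities are standard consequences of \textbf{(H2)}/\textbf{(H3)} via spectral/variational arguments and are already the engine driving the proofs of Theorems~\ref{IP}--\ref{S}.

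With this in hand, I would reuse the smoothing-by-compactly-supported-density/moment-truncation decomposition from the proofs of Theorems~\ref{IP}--\ref{S}: fix a truncation radius $R$ and a smoothing scale $\varepsilon > 0$, and decompose
\[
	\Tcal_p(\mu_T, \mu) \les \varepsilon^p + (\text{moment tail beyond } R) + \sup_{g \in \mathcal{G}_{R,\varepsilon}} |\mu_T(g) - \mu(g)|,
\]
where $\mathcal{G}_{R,\varepsilon}$ is an explicit class of smooth test functions supported in the ball $B(0,R)$ whose $L^\infty$- and $L^2(\mu)$-sizes are controlled in terms of $R$ and $\varepsilon$. A covering/chaining argument on $\mathcal{G}_{R,\varepsilon}$, combined with the Bernstein inequality above, converts the last display into a genuine tail bound
$\Pp^\mu(\Tcal_p(\mu_T,\mu) > r) \le \exp(-c \Phi(T, r, R, \varepsilon))$, up to a deterministic error depending only on $\mu(|\cdot|^q)$ and $R$. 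Optimizing $R$ and $\varepsilon$ as functions of $T$ produces the three regimes of $R_\eta(T)$ in \eqref{rate1}: when $p+d < q/4$ the stochastic term dominates and yields the Fournier--Guillin-type exponent $p/(2(p+d))$; when $p+d>q/4$ the $q$-th moment tail dominates and yields $2p(q-p)/(q(3p+4d))$; the borderline produces the $(\log T)^{3/2}$ correction. The analogous optimization under \textbf{(H3)}, powered by the better variance in the Bernstein bound, yields \eqref{rate2}.

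With a fixed $\eta>1$ and $T_n := n$, the tail estimate gives $\Pp^\mu(\Tcal_p(\mu_n, \mu) > R_\eta(n)) \les (\log n)^{-\eta'}$ with $\eta' > 1$, which is summable; Borel--Cantelli then yields $\Tcal_p(\mu_n, \mu) \le R_\eta(n)$ eventually a.s. To pass from the subsequence $(T_n)$ to all real $T \in [n,n+1]$, I would write
$\mu_T = \frac{n}{T}\mu_n + \frac{T-n}{T}\,\widetilde\mu_{n,T}$
with $\widetilde\mu_{n,T} := (T-n)^{-1}\int_n^T \delta_{X_s}\,\dup s$, and use joint convexity of $\Tcal_p$ to obtain
$\Tcal_p(\mu_T,\mu_n) \le C\,\frac{T-n}{T}\bigl[\widetilde\mu_{n,T}(|\cdot|^p) + \mu_n(|\cdot|^p)\bigr]$,
which is $O(1/n)$ by Birkhoff's ergodic theorem since $q > p$. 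As this gap term is negligible compared with $R_\eta(n)$, the a.s.\ rate is preserved along all $T$.

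The main obstacle is the second step: carrying out the chaining on $\mathcal{G}_{R,\varepsilon}$ so that one obtains a clean Bernstein-type tail bound for the supremum with the correct dependence on $R,\varepsilon$ in both the $L^\infty$ and $L^2(\mu)$ bookkeeping, and then optimizing $R$, $\varepsilon$ and the deviation level $r$ jointly to recover exactly the three-case structure of \eqref{rate1} and \eqref{rate2}, including the sharp $(\log T)^{3/2}$ factor at the borderlines. Everything else is standard once this optimization is in place.
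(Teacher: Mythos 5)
Your high-level strategy (smoothing, Bernstein-type tail bound, discretization, Borel--Cantelli along integer times, interpolation) is the right one and coincides with the paper's in outline, but two points are genuinely problematic. First, the Borel--Cantelli step as you state it does not work: $\sum_n (\log n)^{-\eta'}$ diverges for every $\eta'$, so a tail of size $(\log n)^{-\eta'}$ is not summable. What the paper actually arranges is a \emph{polynomial} tail $\Pp^\mu(\Phi_1(T)\ge C_4\delta_T)\les T^{-2}$: they choose $\delta_T$ so that the quantity $K(\epsilon^d\delta_T)$ in the Bernstein exponent is $\simeq T^{-1}\log T$, giving $\exp(-C\log T)=T^{-C}$. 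The $(\log T)^\eta$ slack in $R_\eta(T)$ is not there to make the probability itself logarithmically small; rather it is absorbed into the exponent to get a polynomially summable bound (and, separately, into the deterministic tail term $\Jcal_2(T)$, where the almost-sure bound on $\int_2^T |X_t|^q/(t(\log t)^\eta)\,\dup t$ requires $\eta>1$). As written, your proposal would not close the Borel--Cantelli argument.

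Second, the reduction to $\sup_{g\in\mathcal G_{R,\varepsilon}}|\mu_T(g)-\mu(g)|$ followed by chaining is a genuinely different (and uncarried-out) route. The paper instead reduces, via the Fournier--Guillin partition lemma, to a \emph{weighted $L^1$-type} quantity
\begin{gather*}
\Phi(T)=\sum_{n=0}^{N}2^{pn}\int_{B_n}\Bigl|\tfrac1T\int_0^T\phi_\epsilon(x,X_t)\,\dup t\Bigr|\,\dup x,
\end{gather*}
discretizes the $x$-integral on a grid of mesh $h(T)$, and then uses a plain union bound over grid points with an \emph{adaptive allocation of thresholds} $\alpha_Q\propto\beta_n/\#\mathcal Q_n$ chosen by the $K(\zeta)$ optimization over scales $n$. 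That allocation (which exploits both the decay $2^{-qn}$ of the variance at scale $n$ and the weight $2^{pn}$) is what produces the three regimes in $R_\eta$; a uniform supremum bound would not see this interplay unless your chaining also allocates variance budgets across scales, which is exactly the hard part you defer. Also note that the smoothing by $\rho_\epsilon$ is borrowed from the proof of Theorem~\ref{D}, not from Theorems~\ref{IP}--\ref{S}, which work directly with indicator test functions; smoothing is needed here so that the integrand $\phi_\epsilon(\cdot,X_t)$ is Lipschitz in $x$, making the grid discretization and the remainder term $\Phi_2(T)$ controllable. Your interpolation between integer times via joint convexity of $\Tcal_p$ and moment bounds is a reasonable alternative to the paper's direct comparison of $\Phi(T)$ with $\Phi(\entier{T})$, although the claimed $O(1/n)$ is slightly too strong: the $p$-th-moment gap term is of order $n^{-(1-\alpha)}$ for any $\alpha>p/q$ (by Markov plus Borel--Cantelli, since only $\mu(|\cdot|^q)<\infty$), which still beats $R_\eta(n)$ in all three regimes, but the statement as written is not justified by Birkhoff alone.
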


The remainder of the paper is organized as follows. In Section~\ref{sec-pre} we explain the relations
among the conditions \textbf{(H1)}, \textbf{(H2)} and \textbf{(H3)}, and we provide some useful lemmas needed in the proof of the main results.  Section~\ref{sec-proof-lp} is devoted to the proofs of the $p$-Wasserstein convergence in the mean sense, and Section~\ref{sec-proof-as} is about almost sure convergence. In Section~5 we apply our findings to specific processes, including diffusions and underdamped Langevin dynamics, and compare them with existing works.

\section{Preliminaries}\label{sec-pre}

\subsection{More about the assumptions (H1)--(H3)}\label{subsec-more}

Let us briefly discuss the relations
 among the assumptions \textbf{(H1)}, \textbf{(H2)} and \textbf{(H3)}. In practice, the exponential decay of the variance  $\Var_\mu(f) := \int \left(f-\mu(f)\right)^2\,\dup\mu = \mu\left((f-\mu(f))^2\right)$  along the semigroup is frequently considered. Therefore, we introduce the following auxiliary assumption:
\begin{enumerate}
	\item[\textbf{(H2$\mbox{}^\prime$)}]
	There are constants $C \ge 1$ and $\lambda_{\mathrm{V}}>0$ such that for all $f \in L^2(\mu)$  and $t\geq 0$,
	\begin{align*}
		\Var_\mu(P_t f) \leq C \eup^{-2\lambda_{\mathrm{V}} t } \, \Var_\mu(f).
	\end{align*}
\end{enumerate}
This condition is in-between \textbf{\upshape(H2)} and \textbf{\upshape(H3)}, to wit
\begin{gather*}
	\textbf{(H3)}
	\implies \textbf{(H2$\mbox{}^\prime$)}
	\implies \textbf{(H2)}.
\end{gather*}
Indeed, since  $\Var_\mu(P_tf) = \mu\left((P_tf)^2\right) - \mu\left(P_tf\right)^2 = \mu\left((P_tf)^2\right) - \mu(f)^2$, we have
\begin{gather*}
	\frac{\dup}{\dup t} \Var_\mu(P_t f)
	= 2 \langle \Lcal P_t f,  P_t f \rangle_{L^2(\mu)},
\end{gather*}
so \textbf{\upshape(H3)} yields \textbf{(H2$\mbox{}^\prime$)} with $C = 1$ and $\lambda_{\mathrm{V}} = \lambda_{\mathrm{C}}$.

Now we assume that \textbf{(H2$\mbox{}^\prime$)} holds. For any $f \in \mathscr D(\Lcal)$ with $\mu(f) = 0$ we can use $\partial_t P_t f = P_t \Lcal f$ and $L^2(\mu)\text{-}\lim_{T \to \infty} P_T f = 0$, to see
\begin{gather*}
	f = - \int_{0}^{\infty} P_t \Lcal f \,\dup t.
\end{gather*}
Combining this with \textbf{(H2$\mbox{}^\prime$)}, we obtain
\begin{gather*}
	\| f \|_{L^2(\mu)}
	\leq \int_{0}^{\infty} \| P_t \Lcal f \|_{L^2(\mu)} \,\dup t
	\leq \sqrt{C} \int_{0}^{\infty} \eup^{- \lambda_{\mathrm{V}} t} \,\dup t  \cdot \| \Lcal f \|_{L^2(\mu)}
	 = \frac{\sqrt{C}}{\lambda_{\mathrm{V}}} \| \Lcal f \|_{L^2(\mu)}.
\end{gather*}
Hence, \textbf{\upshape(H2)} holds with $\lambda_{\mathrm{I}} = C^{-\frac12}\lambda_{\mathrm{V}}$.

Because of the Kantorovich duality, cf.\ \cite[Remark 7.5]{V-2003}, \textbf{\upshape(H1)} implies that for any Lipschitz function $f$ on $\real^d$, all $x \in \real^d$ and $t\ge 0$,
\begin{align*}
	|P_tf(x)-\mu(f)|
	\leq  \Wa_1(\delta_x P_t,\mu P_t) [ f ]_{\Lip}
	\leq C  \eup^{-\lambda_{\mathrm{E}} t} \Wa_1(\delta_x,\mu) \, [ f ]_{\Lip},
\end{align*}
where $[ f ]_{\Lip}  := \sup_{x\neq y} |f(x)-f(y)|/|x-y|$ denotes the Lipschitz constant  (or Lipschitz seminorm)  of $f$. Moreover, if $\mu(|\cdot|^q)  < \infty$ for some $q \ge 1$, then $f \in L^q(\mu)$ and
\begin{gather}\label{decay-for-Lip}
		\| P_t f - \mu(f) \|_{L^q(\mu)}
		\leq C_\mu \eup^{-\lambda_{\mathrm{E}} t} [ f ]_{\Lip}
\end{gather}
for some constant $C_\mu > 0$ depending only on $\mu$. If the generator $\Lcal$ is normal, i.e.\ $\Lcal \Lcal^* =  \Lcal^* \Lcal$, and $q = 2$, then the spectral representation of normal operators allows us to deduce \textbf{\upshape(H3)} with $\lambda_{\mathrm{C}} = \lambda_{\mathrm{E}}$ from \eqref{decay-for-Lip}, see \cite[Theorem 4.1.4]{W05}. The same theorem also shows that \textbf{(H2$\mbox{}^\prime$)} implies \textbf{\upshape(H3)}. Summing up, we have
\begin{gather*}
	\text{if $\Lcal$ is normal, then:}\quad
	\textbf{(H1)}\;\&\; \mu(|\cdot |^2) < \infty \implies \textbf{(H3)} \iff \textbf{(H2$\mbox{}^\prime$)}.
\end{gather*}

It is well known that functional inequalities of the type \textbf{(H1)}--\textbf{(H3)} are closely connected with curvature conditions. A classical example is provided by the symmetric diffusion on $\real^d$ under the Bakry-\'Emery curvature condition  $\mathrm{CD}(\rho, \infty)$ for some $\rho > 0$: set $\mu(\dup x) := \eup^{-V(x)}\,\dup x$, where the potential $V \in C^2(\real^d)$ satisfies $\Hess \, V \geq \rho \, \mathds{I}_{d\times d}$ and is such that $\mu$ becomes a probability measure. Consider the diffusion with the generator $\Lcal = \Delta - \nabla V \cdot \nabla$ and the transition operators $P_t = \eup^{t \Lcal}$. For this diffusion, \textbf{(H1)}--\textbf{(H3)} are satisfied, see e.g.\ \cite{BGL14} or \cite{W05}. Recently, \textbf{(H1)} (even with both $\mu$ and $\nu$ being \emph{arbitrary} probability measures) has been extended to diffusions whose potentials are not uniformly convex, see e.g.\ \cite{E11, LW16, W20}. For related functional inequalities under a variable curvature lower bound, see also \cite{CFG}.

For the proof of our main result, we will need Bernstein-type inequalities for Markov processes, which quantitatively describe the concentration behaviour of the processes. The tail inequality under \textbf{(H2)} can be found in \cite[Theorem 3.4]{HL}, whereas the analogue under the weaker assumption \textbf{(H3)} was  obtained in \cite[Theorem 1.1 and Remark 1.2]{L}.
\begin{theorem}\label{tail}
	Let $\nu  = \Lscr_{X_0}$ denote the initial distribution of the ergodic Markov process $(X_t)_{t\geq 0}$ on $\real^d$ with invariant measure $\mu$, and assume that $\nu(\dup x) = h_\nu(x)\,\mu(\dup x)$. Let $f : \real^d \to \real$ be a measurable function such that $\mu(f) = 0$, $\| f \|_\infty \le M$ and $\Var_\mu(f) \leq \sigma^2$ for some $M, \sigma>0$.
\begin{enumerate}[\upshape (i)]
\item \label{Bern-IP}
	Assume that \textbf{\upshape(H2)} holds and $h_\nu \in L^p(\mu)$ for some $p \in (1, \infty]$. Denote by $q:= p/(p-1)$ the conjugate index of $p$. Then for any $T > 0$ and $\delta > 0$,
	\begin{align*}
		\Pp^\nu \left(\left|\frac 1 T \int_0^T f(X_t) \, \dup t\right| \geq \delta\right)
		&\leq 2 \|h_\nu\|_{L^p(\mu)}\exp\left[-\frac{\lambda_{\mathrm{I}} T \delta^2}{4 q M \sqrt{4\sigma^2+\delta^2}}\right]\\
		&\leq 2 \|h_\nu\|_{L^p(\mu)}\exp\left[-\frac{\lambda_{\mathrm{I}} T}{4\sqrt{5} q}  \cdot \min \left\{\frac{\delta^2}{M \sigma}, \frac{\delta}{M}  \right\}  \right].
	\end{align*}
	Consequently,
	\begin{gather} \label{moment2}
		\Ee^\nu \left[\left| \frac1{T} \int_0^T f(X_t) \, \dup t \right| \right]
		\les \| h_\nu \|_{L^p(\mu)} \sqrt{M} \left( \sqrt{\sigma} \, T^{-\frac12} + \sqrt{M} T^{-1} \right).
	\end{gather}

\item\label{Bernstein}
	Assume that \textbf{\upshape(H3)} holds and $h_\nu \in L^2(\mu)$. Then for any $T > 0$ and $\delta > 0$,
	\begin{align*}
		\Pp^\nu\left( \left| \frac 1T \int_{0}^{T} f(X_t) \, \dup t \right| \ge \delta \right)
		&\leq 2  \left\| h_\nu \right\|_{L^2(\mu)} \exp\left[- \frac{\lambda_{\mathrm{C}} T \delta^2}{(\sigma + \sqrt{\sigma^2 + 2 M \delta})^2} \right]\\
		&\leq 2  \left\| h_\nu \right\|_{L^2(\mu)} \exp\left[- \frac{\lambda_{\mathrm{C}} T}{4 + 2\sqrt{3}} \cdot \min \left\{ \frac{\delta^2}{\sigma^2}, \frac{\delta}{ M}  \right\}  \right].
	\end{align*}
	Consequently,
	\begin{gather} \label{moment1}
		\Ee^\nu \left[ \left| \frac1{T} \int_0^T f(X_t) \, \dup t \right| \right]
		\les \| h_\nu \|_{L^2(\mu)} \left( \sigma T^{-\frac12} + M T^{-1} \right).
	\end{gather}
\end{enumerate}
\end{theorem}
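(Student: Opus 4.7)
The plan is to establish the tail bounds by controlling the log-Laplace transform of the time-average $M_T := T^{-1}\int_0^T f(X_t)\,\dup t$ under the stationary initial law $\mu$, and then to transfer the resulting estimate to a general initial distribution $\nu$ by a H\"older duality on the density $h_\nu$. Integrating the resulting tail bounds in $\delta \in (0,\infty)$ will then yield \eqref{moment2} and \eqref{moment1}.

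First I would perform the change of initial law. For any measurable event $A$,
$$ \Pp^\nu(A) = \Ee^\mu\bigl[h_\nu(X_0)\, \I_A\bigr] \le \|h_\nu\|_{L^p(\mu)}\, \Pp^\mu(A)^{1/q}, $$
with $q$ the conjugate exponent of $p$. This accounts at once for the prefactor $\|h_\nu\|_{L^p(\mu)}$ and for the factor $1/q$ appearing in the denominator of the exponent in part (\ref{Bern-IP}); the case (\ref{Bernstein}) is the $p=q=2$ Cauchy--Schwarz version. It therefore suffices to obtain Bernstein-type bounds under $\Pp^\mu$.

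Under \textbf{(H3)}, I would use the Feynman--Kac representation
$$ \Ee^\mu\Bigl[\exp\Bigl(\theta \int_0^T f(X_t)\,\dup t\Bigr)\Bigr] = \bigl\langle \mathbf{1},\, \eup^{T(\Lcal + \theta f)} \mathbf{1}\bigr\rangle_{L^2(\mu)}, $$
and bound the right-hand side by the top of the $L^2(\mu)$-spectrum of the symmetric part of $\Lcal + \theta f$. Decomposing a unit-norm test function as $g = c\,\mathbf{1} + h$ with $\mu(h)=0$, applying \textbf{(H3)} to $h$ (so that $-\langle \Lcal h,h\rangle_{L^2(\mu)} \ge \lambda_{\mathrm C}\|h\|_{L^2(\mu)}^2$), and using the crude estimates $|\langle f,h\rangle_{L^2(\mu)}| \le \sigma\|h\|_{L^2(\mu)}$ and $|\langle fh,h\rangle_{L^2(\mu)}| \le M\|h\|_{L^2(\mu)}^2$, a one-dimensional maximisation over $(c,\|h\|)$ with $c^2+\|h\|_{L^2(\mu)}^2=1$ produces the cumulant bound $\psi(\theta) \le \theta^2 \sigma^2/(\lambda_{\mathrm C}-M\theta)$ for $0<\theta<\lambda_{\mathrm C}/M$. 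Exponential Chebyshev and the standard Bernstein optimisation in $\theta$ then give the tail estimate of (\ref{Bernstein}).

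The main obstacle is \textbf{(H2)}, where $\Lcal$ need not be symmetric and the iterated Poincar\'e inequality only gives the one-sided resolvent bound $\|\Lcal g\|_{L^2(\mu)} \ge \lambda_{\mathrm I}\|g\|_{L^2(\mu)}$ on centred $g$, which is strictly weaker than a spectral gap. Here I would follow a Kipnis--Varadhan decomposition: solve the Poisson equation $-\Lcal u = f$ in $L^2(\mu)$ (which \textbf{(H2)} permits with $\|u\|_{L^2(\mu)} \le \lambda_{\mathrm I}^{-1}\|f\|_{L^2(\mu)}$) and write $TM_T = u(X_0)-u(X_T)+N_T$ for an $L^2$-martingale $(N_t)$ whose quadratic variation is controlled by the carr\'e du champ $\Gamma(u)$. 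Since an a priori $L^\infty$-bound on $u$ is unavailable, one approximates $f$ by a regularised version, truncates, and applies an exponential martingale inequality: the variance scale $\mu(\Gamma(u)) \le C \sigma^2$ combines with the jump scale $\|f\|_\infty \le M$ to produce the $\min\{\delta^2/(M\sigma),\delta/M\}$ shape of the exponent. Finally, \eqref{moment2} and \eqref{moment1} follow from $\Ee^\nu[|M_T|] = \int_0^\infty \Pp^\nu(|M_T|\ge\delta)\,\dup\delta$ by splitting the integral at the crossover between the sub-Gaussian and sub-exponential regimes and integrating out $\delta$.
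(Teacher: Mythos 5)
The paper does not actually prove the two tail inequalities: they are quoted from Huang--Li \cite{HL} for part (i) and from Lezaud \cite{L} for part (ii), and the only step the paper itself carries out is the passage from the tail bounds to the moment bounds \eqref{moment2} and \eqref{moment1} via the layer-cake formula. Your version of that step --- split the $\delta$-integral at the crossover between the sub-Gaussian and sub-exponential regimes and integrate each piece --- is correct. Your treatment of part (ii) via the Feynman--Kac representation $\Ee^\mu\bigl[\exp\bigl(\theta\int_0^T f(X_t)\,\dup t\bigr)\bigr]=\bigl\langle \mathbf 1, \eup^{T(\Lcal+\theta f)}\mathbf 1\bigr\rangle_{L^2(\mu)}$ together with a top-of-spectrum bound for the symmetric part of $\Lcal+\theta f$ is essentially Lezaud's argument. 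One caveat: applying H\"older directly to $\Ee^\mu[h_\nu(X_0)\I_A]$ as you propose does produce the factor $1/q$ in the exponent of (i), but in the $p=q=2$ case of (ii) it would lose a factor $1/2$ which is \emph{not} present in the stated bound; Lezaud instead pairs the density $h_\nu$ inside the Feynman--Kac formula, $\Ee^\nu\bigl[\eup^{\theta\int_0^T f(X_t)\,\dup t}\bigr]=\langle h_\nu, \eup^{T(\Lcal+\theta f)}\mathbf 1\rangle_{L^2(\mu)}$, so that $\|h_\nu\|_{L^2(\mu)}$ enters only as a prefactor outside the exponential.

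The real gap is in part (i). A Kipnis--Varadhan decomposition $\int_0^T f(X_t)\,\dup t = u(X_0)-u(X_T)+N_T$ based on a Poisson solution $-\Lcal u=f$ faces two obstacles. First, \textbf{(H2)} gives injectivity of $\Lcal$ and a lower bound on $\|\Lcal g\|_{L^2(\mu)}$ for centred $g$ in the domain, but does not by itself give surjectivity onto centred $L^2(\mu)$; solvability of the Poisson equation needs a separate argument. Second, and decisively, the exponential-martingale (Freedman-type) inequality you invoke requires pointwise control of the increments of $N_t$ or of the carr\'e du champ $\Gamma(u)$, whereas $u$ is only controlled in $L^2(\mu)$ by $\|u\|_{L^2(\mu)}\le\lambda_{\mathrm I}^{-1}\sigma$; truncating $u$ destroys the martingale structure and the truncation error is not controlled by the available hypotheses. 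Moreover, even setting these issues aside, the Kipnis--Varadhan variance scale $\mu(\Gamma(u))=\langle f,u\rangle_{L^2(\mu)}\le\lambda_{\mathrm I}^{-1}\sigma^2$ would produce a sub-Gaussian exponent of order $\lambda_{\mathrm I}T\delta^2/\sigma^2$ --- the \emph{same} shape as part (ii) --- whereas (i) has the weaker variance scale $M\sigma$ in place of $\sigma^2$ (recall $\sigma\le M$ without loss of generality). That degradation is exactly the price paid for replacing \textbf{(H3)} by \textbf{(H2)}, and it does not come out of the decomposition you propose. The cited Huang--Li argument \cite{HL} proceeds differently, and reproducing the stated form of (i) would require their mechanism or a substitute for it.
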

The moment bounds \eqref{moment2} and \eqref{moment1}  follow directly from the tail bounds via the layer-cake formula:
\begin{gather*}
	\Ee^\nu \left[ \left| \frac1{T} \int_0^T f(X_t) \, \dup t \right| \right]
	= \int_{0}^{\infty} \Pp^\nu \left(\left|\frac 1 T \int_0^T f(X_t)\,\dup t\right| \ge \delta\right) \dup \delta.
\end{gather*}
 For large $T\gg 1$, the inequalities in \eqref{Bernstein} are better if $\sigma \ll M$.

\subsection{Upper bounds for $\Tcal_p$} \label{2.2}

Let us briefly recall a key tool, due to \cite[Lemma 5 and 6]{FG}, to obtain upper bounds for $\Tcal_p$; see also \cite{DSS, HK} for related bounds. For each $\ell \in \nat$, denote by $\Pcal_\ell$ the natural partition of $(-1,1]^d$ into $2^{d\ell}$ dyadic cubes of side-length $2\cdot 2^{-\ell}$. For $F \in \Pcal_\ell$ and $n\in \nat$, we write $2^n F:= \{2^n x \,:\, x\in F\}$,  and set
\begin{gather*}
	B_0 := (-1,1]^d,
	\quad
	B_n := (-2^n,2^n]^d \setminus (-2^{n-1},2^{n-1}]^d, \quad n\ge 1.
\end{gather*}
\begin{lemma}\label{Wp}
	For every $p>0$, there exists a constant $C_p>0$ such that for all $\nu_0,\nu_1 \in\Pscr(\real^d)$,
	\begin{gather*}
		\Tcal_p (\nu_0, \nu_1)
		\leq C_p \sum_{n\ge 0} 2^{pn}\sum_{\ell\geq 0} 2^{-p\ell}\sum_{F\in\Pcal_\ell} \left| \nu_0(2^n F\cap B_n)-\nu_1(2^n F\cap B_n) \right|.
	\end{gather*}
\end{lemma}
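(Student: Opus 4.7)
The plan is to construct an explicit coupling $\pi \in \mathscr{C}(\nu_0, \nu_1)$ realizing the claimed multi-scale bound. The underlying geometry is as follows: for each fixed annulus index $n \ge 0$, the collection $\{2^n F \cap B_n : F \in \Pcal_\ell,\, \ell \ge 0\}$ is a dyadic filtration of $B_n \subset (-2^n, 2^n]^d$, with cells at depth $\ell$ of diameter $\les 2^{n-\ell}$. The outer factor $2^{pn}$ records the ambient scale of $B_n$, and $2^{-p\ell}$ records the resolution at depth $\ell$. Note also that, since $\Pcal_0 = \{(-1,1]^d\}$, the $n$-th block contributes $|\nu_0(B_n) - \nu_1(B_n)|$ as its $\ell = 0$ summand, which is the natural repository for any inter-annular mass imbalance.

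First I would split $\nu_i = \sum_{n \ge 0} \nu_i^n$ with $\nu_i^n(\cdot) := \nu_i(\cdot \cap B_n)$, and couple $\nu_0^n$ to $\nu_1^n$ separately within each $B_n$; the mass imbalance $|\nu_0(B_n)-\nu_1(B_n)|$ is shuttled between annuli at cost $\les 2^{pn}$ per unit mass, and is absorbed into the $\ell = 0$ summand as noted. For the within-annulus part, the dilation $x \mapsto 2^{-n}x$ (which multiplies the cost by $2^{pn}$) reduces the matter to proving, for any equal-mass probability measures $\mu_0, \mu_1$ on $(-1,1]^d$, the unit-cube estimate
\begin{gather*}
    \Tcal_p(\mu_0, \mu_1) \les \sum_{\ell \ge 0} 2^{-p\ell} \sum_{F \in \Pcal_\ell} |\mu_0(F) - \mu_1(F)|.
\end{gather*}
For this I would construct a coupling through the dyadic tree on $(-1,1]^d$: match the common mass $\mu_0 \wedge \mu_1$ identically (cost zero), and transport the Jordan residual $(\mu_0 - \mu_1)_+ \to (\mu_0 - \mu_1)_-$ scale by scale. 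At each node $F \in \Pcal_\ell$, one matches as much sibling mass as possible among the children $F^{(j)} \in \Pcal_{\ell+1}$; the leftover defect, of total size $\frac12\sum_j |\mu_0(F^{(j)}) - \mu_1(F^{(j)})| - \frac12|\mu_0(F)-\mu_1(F)|$, is then transported within $F$ at per-unit cost $\les 2^{-p\ell}$. Abel summation over $\ell$ telescopes these scale-by-scale costs into exactly $\sum_\ell 2^{-p\ell}\sum_{F \in \Pcal_\ell}|\mu_0(F)-\mu_1(F)|$.

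The main obstacle is the careful bookkeeping of these dyadic defects so that every unit of mass is charged exactly once and so that the scale-$\ell$ contribution ends up precisely as $2^{-p\ell}\sum_{F \in \Pcal_\ell}|\mu_0(F)-\mu_1(F)|$. A convenient viewpoint is martingale/Haar-type: relative to the dyadic filtration, the signed measure $\mu_0 - \mu_1$ decomposes into scale-$\ell$ increments whose total variations telescope to the displayed sum. Convergence of $\sum_\ell 2^{-p\ell}$ for all $p > 0$, together with $\sum_{F \in \Pcal_\ell}|\mu_0(F)-\mu_1(F)| \le 2$, ensures a finite constant $C_p$ depending only on $p$ and $d$ through this geometric series and the dimensional diameter factor $\sqrt{d}^{\,p}$.
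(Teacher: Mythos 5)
The paper does not prove this lemma itself; it is quoted from Fournier--Guillin \cite{FG}, Lemmas 5 and 6, so the relevant comparison is to the argument there. Your plan correctly reconstructs that argument: annular decomposition, rescaling each annulus to the unit cube, a dyadic multiscale coupling whose level-$\ell$ cost is controlled by $2^{-p\ell}(D_{\ell+1}-D_\ell)$ with $D_\ell := \sum_{F\in\Pcal_\ell}|\mu_0(F)-\mu_1(F)|$, and Abel summation (using $D_0=0$ and $2^{-p\ell}D_\ell\to 0$) to telescope into $\sum_\ell 2^{-p\ell}D_\ell$.

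Two steps should be made precise for the proof to close. First, ``shuttled between annuli at cost $\lesssim 2^{pn}$ per unit mass'' is not literally true: mass travelling from $B_n$ to $B_m$ incurs cost $\simeq 2^{p\max(n,m)}$. The standard fix is to route all cross-annular excess through one fixed reference point (say the origin) and invoke the quasi-triangle inequality $|x-y|^p \le 2^{(p-1)_+}\bigl(|x|^p+|y|^p\bigr)$; this splits the cost so that $B_n$ is charged $\lesssim 2^{pn}\,|\nu_0(B_n)-\nu_1(B_n)|$, which is precisely the $\ell=0$ summand you identify. Second, the restrictions $\nu_0|_{B_n}$ and $\nu_1|_{B_n}$ have unequal total masses in general, so the rescaled unit-cube estimate does not apply to them directly. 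One either extracts equal-mass sub-measures by renormalizing (the renormalization error is again of order $|\nu_0(B_n)-\nu_1(B_n)|$ and is absorbed into the $\ell=0$ term), or runs the greedy bottom-up dyadic matching that promotes unmatched mass to the parent cube -- this handles the mass mismatch automatically and passes the leftover surplus to the cross-annular stage. Neither fix is deep, but both need to be spelled out; as written, ``couple $\nu_0^n$ to $\nu_1^n$ separately'' and then ``reduce to equal-mass probability measures on $(-1,1]^d$'' silently skips them.
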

With the help of this lemma, the problem of estimating $\Ee^\mu[\Tcal_p (\mu_T, \mu) ]$ reduces to that of controlling mean differences of the form
$
	\Ee^\mu \left[|\mu_T(A) - \mu(A)| \right],
$
where $A$ is a Borel subset of $\real^d$; this is
why the concentration results from Theorem~\ref{tail} will come in.

\section{Proofs of Theorems \ref{D}, \ref{IP} and  \ref{S}}\label{sec-proof-lp}

\subsection{Proof of Theorem \ref{D}}\label{subsec-proof-1}

In this section, we establish upper bounds for $\Ee^\mu [\Tcal_p(\mu_T, \mu)]$ under $\textbf{(H1)}$. Since $\mu_T$ is singular with respect to $\mu$, we first mollify it by convolving it with smooth densities, which has  the form $\rho_\epsilon(x):=\epsilon^{-d}\rho (x/ \epsilon)$, where $\rho$ is a smooth function with support in $\overline{B_1(0)}$. As usual, $B_r(x)$ denotes the open ball in $\real^d$ centred at $x$ with radius $r > 0$.

Let $\xi$ be a $\real^d$-valued random variable, which is independent of the process $(X_t)_{t \ge 0}$ and has a probability density function \(\rho \in C_c^\infty(\real^d) \) such that $\supp\rho \subset \overline{B_1(0)}$. If $Y\sim \nu$ is a further $\real^d$-valued random variable, which is independent of $\xi$, then for any $\epsilon>0$,
$
	\Lscr_{Y + \epsilon \xi} = \nu * \Lscr_{\epsilon \xi}.
$
 Observe that $\epsilon \xi \sim \rho_\epsilon(x)\dup x$. From the definition of $\Tcal_p$, we see that for all $p>0$,
$$
	\Tcal_p (\nu*\Lscr_{\epsilon \xi},\nu)
	\leq \Ee[|Y + \epsilon\xi- Y|^{p}]=\epsilon^{p}\Ee[|\xi|^{p}]
	\lesssim \epsilon^p.
$$
Combining this  with the triangle inequality for $\Wa_p$ shows
\begin{gather} \label{WP-2}
\begin{split}
	\Tcal_p (\mu_T,\mu)
	&\lesssim \Tcal_p \left(\mu_T,\mu_T*\Lscr_{\epsilon \xi}\right) + \Tcal_p \left(\mu_T*\Lscr_{\epsilon \xi},\mu*\Lscr_{\epsilon \xi}\right) + \Tcal_p \left(\mu*\Lscr_{\epsilon \xi},\mu\right)\\
	&\lesssim \epsilon^{p} + \Tcal_p \left(\mu_T*\Lscr_{\epsilon \xi},\mu*\Lscr_{\epsilon \xi}\right).
\end{split}
\end{gather}
We can now use Lemma \ref{Wp} in order to estimate the second term on the right-hand side.

For any Borel set $A \subseteq \real^d$, it is easily checked that
\begin{gather} \label{pmu}
	\left( \mu_T*\Lscr_{\epsilon \xi} \right) (A)
	= \frac1T \int_{0}^{T} \Pp (X_t + \epsilon \xi \in A) \, \dup t,\,\,\,
	\left( \mu*\Lscr_{\epsilon \xi} \right) (A)
	= \Pp(X+\epsilon \xi \in A),
\end{gather}
where $X$ is a random variable that is independent of $\xi$ with $\Lscr_X = \mu$. If $\Lscr_{X_0} = \mu$, then $\Lscr_{X_t} = \mu$, since $\mu$ is an invariant measure. Consequently,
\begin{gather}\begin{split} \label{Inv}
	&\Ee^\mu \left[\left| \left( \mu_T*\Lscr_{\epsilon \xi} \right) (A) - \left( \mu*\Lscr_{\epsilon \xi} \right)(A) \right|  \right] \\
	&\leq \Ee^\mu \left[ \left( \mu_T*\Lscr_{\epsilon \xi} \right) (A) \right] + \left( \mu*\Lscr_{\epsilon \xi} \right)(A)\\
	&\leq \frac1T \int_{0}^{T} \Ee^\mu \left[ \Pp (X_t + \epsilon \xi \in A) \right] \dup t + \Pp (X+\epsilon \xi \in A)\\
	&= 2 \Pp(X+\epsilon \xi \in A).
\end{split}
\end{gather}
For $\epsilon\in (0,1)$ and any Borel set $A\subset\real^d$, we  define the function $f_{A, \epsilon}: \real^d \to \real$ by
\begin{gather}\label{f-eps}
\begin{split}
	f_{A, \epsilon}(z)
	:=& \left( \delta_z * \Lscr_{\epsilon \xi} \right) (A) - \left( \mu*\Lscr_{\epsilon \xi} \right) (A)\\
	=& \int_A\rho_\epsilon(x-z)\, \dup x -\int_{\real^d} \left( \int_A \rho_\epsilon(x-y)\, \dup x \right) \mu(\dup y),
	\quad z \in \real^d.
\end{split}
\end{gather}
Clearly, $\mu(f_{A, \epsilon})=0$ and $\| f_{A, \epsilon} \|_\infty \le 1$. Furthermore, $f_{A, \epsilon}$ is Lipschitz continuous with
\begin{align} \label{lips}
	[f_{A, \epsilon}]_{\Lip}
	\leq \min \left\{ \epsilon^{-d-1} \vol(A),  2 \vol(B_1(0)) \epsilon^{-1} \right\} \cdot [\rho]_{\Lip},
\end{align}
where $\vol(\cdot)$ is Lebesgue measure. Indeed, since $\supp\rho_\epsilon\subset \overline{B_\epsilon(0)}$, we have for any $z_1, z_2 \in \real^d$,
\begin{align*}
	|f_{A, \epsilon}(z_1) - f_{A, \epsilon}(z_2)|
	&= \left| \int_{A \cap B_\epsilon(z_1)}\rho_\epsilon(x-z_1) \,\dup x - \int_{A \cap B_\epsilon(z_2)}\rho_\epsilon(x-z_2)\,\dup x  \right|\\
	&\leq \int_{A \cap (B_\epsilon(z_1) \cup B_\epsilon(z_2))} \left|\rho_\epsilon(x-z_1) - \rho_\epsilon(x-z_2)\right| \dup x\\
	&\leq \vol (A \cap (B_\epsilon(z_1) \cup B_\epsilon(z_2))) \cdot [\rho_\epsilon]_{\Lip} |z_1 - z_2|,
\end{align*}
which gives \eqref{lips}. By the Markov property of $(X_t)_{t\ge0}$ and the invariance of $\mu$ under the operators $P_t$ for all $t>0$, we deduce that
\begin{align*}
	&\Ee^\mu \left[\left| \left( \mu_T*\Lscr_{\epsilon \xi} \right) (A)- \left( \mu*\Lscr_{\epsilon \xi} \right)(A) \right|^2  \right] \\
	&= \Ee^\mu \left[\left|\frac 1 T \int_0^T f_{A, \epsilon} (X_t)\,\dup t\right|^2\right]
	= \Ee^\mu \left[\frac 1 {T^2} \int_0^T \int_0^T f_{A, \epsilon} (X_t) \,f_{A, \epsilon} (X_s)\,\dup t\,\dup s\right]\\
	&= \frac 2 {T^2} \int_0^T\int_{s}^T\Ee^{\mu} \left[f_{A, \epsilon}(X_{s}) \, f_{A, \epsilon}(X_{t}) \right]\dup t\,\dup {s}
	=\frac 2 {T^2}\int_0^T\int_{s}^T \mu \left( P_{s} \left( f_{A, \epsilon} \, P_{t-s}f_{A, \epsilon} \right) \right)\dup t\,\dup {s}\\
	&= \frac 2 {T^2}\int_0^T\int_{s}^T\mu \left( f_{A, \epsilon} \, P_{t-s}f_{A, \epsilon} \right)\dup t\,\dup {s}
	\leq \frac 2 {T^2}\int_0^T\int_{s}^T \left\| P_{t- s} f_{A, \epsilon} \right\|_{L^1(\mu)}\,\dup t\,\dup {s}.
\end{align*}
Assuming \textbf{(H1)} and $\mu(|\cdot|) < \infty$, we get from \eqref{decay-for-Lip} that for any $t > 0$,
\begin{align*}
	\left\| P_t f_{A, \epsilon} \right\|_{L^1 (\mu)}
	=  \left\| P_t f_{A, \epsilon} - \mu(f_{A, \epsilon})\right\|_{L^1 (\mu)}
	\les \eup^{-\lambda_{\mathrm{E}} t}[f_{A, \epsilon}]_{\Lip}.
\end{align*}
Therefore, using the Cauchy-Schwarz inequality along with \eqref{lips}, we have
\begin{gather}\label{DM}
	\Ee^\mu \left[\left| \left( \mu_T*\Lscr_{\epsilon \xi} \right) (A)- \left( \mu*\Lscr_{\epsilon \xi} \right)(A) \right|\right]
	\lesssim  \frac{1\wedge\sqrt{\epsilon^{-d}\vol(A)}}{\sqrt{T\epsilon}}.
\end{gather}
Combining \eqref{Inv} with \eqref{DM}, we see for any Borel set $A \subseteq \real^d$ and $\epsilon \in (0, 1)$,
\begin{align*}
	\Ee^\mu \left[\left| \left( \mu_T*\Lscr_{\epsilon \xi} \right) (A)- \left( \mu*\Lscr_{\epsilon \xi} \right)(A) \right|\right]
	\lesssim
	\min\left\{\Pp(X+\epsilon \xi \in A), \frac{1\wedge\sqrt{\epsilon^{-d} \vol(A)}}{\sqrt{T\epsilon}} \right\},
\end{align*}
where we recall that $X\sim\mu$ is independent of $\xi$.

We will now use the notation introduced in Section~\ref{2.2}. For every $n, \ell \in \nat$ and $\epsilon \in (0, 1)$,
\begin{align*}
	&\sum_{F \in \mathcal P_\ell} \Ee^\mu \left[\left| \left( \mu_T*\Lscr_{\epsilon \xi} \right) (2^n F\cap B_n)- \left( \mu*\Lscr_{\epsilon \xi} \right)(2^n F\cap B_n) \right|\right]\\
	&\lesssim  \min \left\{ \sum_{F \in \mathcal P_\ell} \Pp(X + \epsilon \xi \in 2^n F\cap B_n), T^{-\frac12} \epsilon^{-\frac12} \sum_{F \in \mathcal P_\ell} \big(1\wedge \sqrt{\epsilon^{-d} \vol(2^n F\cap B_n)}\big)  \right\}\\
	&\les \min\left\{ \Pp(X + \epsilon \xi \in B_n), T^{-\frac12} \epsilon^{-\frac12} \sum_{F \in \mathcal P_\ell} \big(1\wedge \sqrt{\epsilon^{-d} \vol(2^n F\cap B_n)}\big) \right\}.
\end{align*}
Since $|\xi| \le 1$ a.s., we find for the $\epsilon$-enlarged set  $B_n^\epsilon :=  \left\{x\in\real^d \,:\, |x-y| < \epsilon \text{\ for some\ } y\in B_n\right\}$
that
$
	\Pp(X + \epsilon \xi \in B_n) \leq  \Pp(X\in B_n^\epsilon) =  \mu(B_n^\epsilon),
$
Since $\mu$ has a finite $q$-th moment  $\mu(|\cdot|^q)<\infty$, we can use the Markov inequality to get for all $\epsilon\in (0, 1)$ and $n \in \nat$,
\begin{gather*}
	\mu(B_n^\epsilon)
	= \Pp(X\in B_n^\epsilon)
	\lesssim 2^{-qn},
\end{gather*}
and, therefore, $\Pp(X + \epsilon \xi \in B_n) \les 2^{-qn}$.

Using that $\#\Pcal_\ell=2^{d\ell}$  and $\vol(2^n F) =  2^d\cdot  2^{(n - \ell)d}$, we obtain
\begin{gather*}
	\sum_{F \in \mathcal P_\ell} 1\wedge \sqrt{\epsilon^{-d} \vol(2^n F\cap B_n)}
	\les  \left[ (2^{n-\ell} \epsilon^{-1})^{\frac{d}2} \wedge  1   \right] \cdot 2^{d\ell}.
\end{gather*}
 Combining all estimates from above, we see that
\begin{gather}\label{Wp-F}
\begin{split}
	\sum_{F \in \mathcal P_\ell} \Ee^\mu &\left[\left| \left( \mu_T*\Lscr_{\epsilon \xi} \right) (2^n F\cap B_n)- \left( \mu*\Lscr_{\epsilon \xi} \right)(2^n F\cap B_n) \right|\right]\\
	&\lesssim  \min \left\{ 2^{-qn},  T^{-\frac12} \epsilon^{-\frac12} \left[ (2^{n-\ell} \epsilon^{-1})^{\frac{d}2} \wedge  1   \right] \cdot 2^{d \ell} \right\}.
\end{split}
\end{gather}
For fixed $p>0$ and $q > \max\{p, 1\}$ and every $n \in \nat$, $T \ge 2$, $\epsilon \in \left(0, \frac12\right)$, we define
\begin{gather*}
	\Ical_{p, q}(n, T, \epsilon)
	:= \sum_{\ell \ge 0} 2^{-p\ell} \min \left\{ 2^{-qn},  T^{-\frac12} \epsilon^{-\frac12} \left[ (2^{n-\ell} \epsilon^{-1})^{\frac{d}2} \wedge  1   \right] \cdot 2^{d \ell} \right\}.
\end{gather*}
Set $k  := \log_2(\epsilon^{-1})$ and $k' := \log_2(T^{\frac12} \epsilon^{\frac12})$. Then
\begin{gather*}
	\Ical_{p, q}(n, T, \epsilon)
	= \sum_{\ell \ge 0} 2^{-p\ell + \min \{ -qn, \, -k'+ d \ell -\frac{d}2 (\ell - n - k)_+   \}}.
\end{gather*}
Consequently,
\begin{gather*}
	\Ical_{p, q}(n, T, \epsilon)
	\les \sum_{0 \le \ell < \ell_0} 2^{-p \ell -k'+ d \ell -\frac12 d(\ell - n - k)_+} + 2^{-qn} \sum_{\ell \ge \ell_0} 2^{-p\ell},
\end{gather*}
where
\begin{gather*}
	\ell_0
	:=
	\begin{cases}
		0   				& \text{if\ \ } T^{\frac 12} \epsilon^{\frac 12} < 2^{qn}, \\
		\frac{k'-qn}{d}   	& \text{if\ \ } T^{\frac12} \epsilon^{\frac12} \geq 2^{qn} \;\;\&\;\; T^{\frac12} \epsilon^{d + \frac12} \leq 2^{(q + d)n},\\
		\frac{2(k'-q n)}{d} - n - k  & \text{if\ \ } T^{\frac12} \epsilon^{d + \frac12} > 2^{(q + d)n}. \\
	\end{cases}
\end{gather*}
To keep notation simple, we define for $T \ge 2$ and $\epsilon \in (0, \frac12)$,
\begin{gather*}
	N_1 := \frac{\log_2\left(T^{\frac12} \epsilon^{d + \frac12}\right)}{q+d}
	\quad\text{and}\quad
	N_2 := \frac{\log_2\left(T^{\frac12} \epsilon^{\frac12}\right)}{q}.
\end{gather*}
It is not hard to see that $N_2 - N_1 = O\left(\log_2 \left(T\epsilon^{-(2q-1)}\right)\right)$.  Now we consider three cases. Below, we use several times the fact that a geometric series can be estimated by its leading term times a constant.

\noindent\textbf{Case (i) $p \ge d$}: A direct computation leads to
\begin{gather*}
	\Ical_{p, q}(n, T, \epsilon)
	\les 2^{-qn} \I_{\{ n > N_2 \}}
	+
	\begin{cases}
 	(T\epsilon)^{-\frac12} \I_{\{ n \le N_2 \}}    & \text{if\ \ } p > d, \\
    (T\epsilon)^{-\frac12}[\ell_0 \wedge (n + \log_2(\epsilon^{-1}))] \I_{\{ n \le N_2 \}}  & \text{if\ \ } p = d.
   \end{cases}
\end{gather*}
Therefore,
\begin{gather}
	\sum_{n \ge 0} 2^{pn} \Ical_{p, q}(n, T, \epsilon)
	\les
	\begin{cases}
 	(T\epsilon)^{-\frac12(1 - \frac{p}{q})}   & \text{if\ \ } p > d, \\
    (T\epsilon)^{-\frac12(1 - \frac{d}{q})} \log(T \epsilon^{-(2q - 1)})  & \text{if\ \ } p = d.
 	\end{cases}
\end{gather}

\noindent\textbf{Case (ii) $p \in [d/2, d)$}: In this case we have
\begin{align*}
	&\Ical_{p, q}(n, T, \epsilon) \les 2^{-qn} \I_{\{ n > N_2 \}} \\
	&\mbox{}  +
	\begin{cases}
  		(T \epsilon)^{-\frac{p}{2d}} 2^{-(1 - \frac{p}{d})qn} \I_{\{ N_1 \le n \le N_2 \}} + T^{-\frac12} \epsilon^{-(d - p)- \frac12} 2^{(d - p)n} \I_{\{ n < N_1 \}}
  		&\text{if\ \ } p \in \left(\tfrac d2, d\right), \\
  		(T \epsilon)^{-\frac14} 2^{-\frac{qn}{2}} \I_{\{ N_1 \le n \le N_2 \}} +  T^{-\frac12} \epsilon^{-\frac{d}{2} - \frac12} 2^{\frac{dn}{2}} ( \ell_0 - n - \log_2(\epsilon^{-1}))  \I_{\{ n < N_1 \}}
  		& \text{if\ \ } p  = \tfrac d2.
   \end{cases}
\end{align*}
Therefore, if $p \in (d/2, d)$,
\begin{gather}
	\sum_{n \ge 0} 2^{pn} \Ical_{p, q}(n, T, \epsilon)
	\les
	\begin{cases}
		T^{-\frac{q}{2(q+d)}} \epsilon^{p - \frac{(2d+1)q}{2(q+d)}}
		&\text{if\ \ } q > \frac{dp}{d - p}, \\
 		(T\epsilon)^{-\frac{p}{2d}} \log(T\epsilon^{-(2q-1)})
 		&\text{if\ \ } q = \frac{dp}{d - p}, \\
    	(T\epsilon)^{-\frac12(1 - \frac{p}{q})}
    	&\text{if\ \ } q < \frac{dp}{d - p}.
 	\end{cases}
\end{gather}
If $p = d/2$, we use that for $n < N_1$,
\begin{gather*}
	\ell_0 - n - \log_2(\epsilon^{-1})
	\leq \frac{2k'-dk}{d} - \log_2(\epsilon^{-1})
	= O\left(\log\left(T\epsilon^{2d+1}\right)\right),
\end{gather*}
and so
\begin{gather}
	\sum_{n \ge 0} 2^{pn} \Ical_{p, q}(n, T, \epsilon)
	\les
	\begin{cases}
 		T^{-\frac{q}{2(q+d)}} \epsilon^{\frac{d}{2} - \frac{(2d+1)q}{2(q+d)}} \log(T\epsilon^{2d+1})
 		&\text{if\ \ } q > d, \\
		(T\epsilon)^{-\frac14} \log(T\epsilon^{-(2q-1)})
		&\text{if\ \ } q = d, \\
    	(T\epsilon)^{-(\frac12 - \frac{d}{4q})}
    	& \text{if\ \ } q < d.
	\end{cases}
\end{gather}

\noindent\textbf{Case (iii) $p < d/2$}: In this case we have
\begin{align*}
	\Ical_{p, q}(n, T, \epsilon)
	\les &2^{-qn} \I_{\{n > N_2  \}} +  (T \epsilon)^{-\frac{p}{2d}} 2^{-(1 - \frac{p}{d})qn} \I_{\{ N_1 \le n \le N_2 \}} \\
	&+ T^{-\frac{p}{d}} \epsilon^{-p -\frac{p}{d} } 2^{[p - (1 - \frac{2p}{d})q]n} \I_{\{ n < N_1   \}}.
\end{align*}
Therefore,
\begin{gather*}
	\sum_{n \ge 0} 2^{pn} \Ical_{p, q}(n, T, \epsilon)
	\les
	\begin{cases}
  		T^{-\frac{p}{d}} \epsilon^{-p - \frac{p}{d}}  & \text{if\ \ } q> \frac{2dp}{d - 2p}, \\
  		T^{-\frac{p}{d}} \epsilon^{-p - \frac{p}{d}} \log(T \epsilon^{2d + 1})   & \text{if\ \ } q = \frac{2dp}{d - 2p}, \\
  		T^{-\frac{q}{2(q+d)}} \epsilon^{p - \frac{q(2d + 1)}{2(q + d)}} & \text{if\ \ } q  \in \left(\frac{dp}{d - p}, \frac{2dp}{d - 2p}\right), \\
  		(T\epsilon)^{-\frac{p}{2d}}  \log(T\epsilon^{-(2q-1)})  & \text{if\ \ } q = \frac{dp}{d - p}, \\
  		(T\epsilon)^{-\frac12(1 - \frac{p}{q})}  & \text{if\ \ } q < \frac{dp}{d - p}.
   \end{cases}
\end{gather*}

Combining Lemma \ref{Wp} with \eqref{WP-2} and \eqref{Wp-F}, we obtain for any $T \ge 2$ and $\epsilon \in (0, 1)$ that
\begin{gather*}
	\Ee^\mu [\Tcal_p (\mu_T, \mu)]
	\les \epsilon^p + \sum_{n \ge 0} 2^{pn} \, \Ical_{p, q}(n, T, \epsilon).
\end{gather*}
Optimizing in $\epsilon \in (0, 1)$ yields, if $p \in (0, d)$,  then
\begin{align*}
	\Ee^\mu [\Tcal_p (\mu_T, \mu)]
	\lesssim
	\begin{cases}
  		T^{-\frac{p}{2d+1}}   & \text{if\ \ } q > \frac{dp}{d - p}, \\
   		T^{-\frac{p}{2d+1}} (\log T)^{\frac{2d}{2d+1}}  & \text{if\ \ } q = \frac{dp}{d - p}, \\
  		T^{-\frac{q- p}{2q + (q/p) - 1}}  & \text{if\ \ } q < \frac{dp}{d - p};
   \end{cases}
\end{align*}
if $p \ge d$, then we get in this way
\begin{align*}
	\Ee^\mu [\Tcal_p (\mu_T, \mu)]
	\lesssim
	\begin{cases}
  		T^{-\frac{q- p}{2q + (q/p) - 1}}   & \text{if\ \ } p > d, \\
  		T^{-\frac{q- d}{2q + (q/d) - 1}} (\log T)^{\frac{2q}{2q + (q/d) - 1}}   & \text{if\ \ } p = d.
   \end{cases}
\end{align*}
This completes the proof of Theorem \ref{D}.

\subsection{Proofs of Theorems \ref{IP} and \ref{S}}\label{subsec-proof-23}

We will prove Theorem~\ref{IP} and Theorem~\ref{S} with a unified argument based on the Bernstein-type inequalities from Theorem \ref{tail}.

Because $\mu$ is an invariant measure of the semigroup $(P_t)_{t\ge 0}$, we obtain that for every Borel set $A\subset\real^d$
\begin{equation}\label{I}\begin{split}
	\Ee^\mu \left[ |\mu_T(A)-\mu(A)| \right]
	\leq &\Ee^\mu[\mu_T(A)] + \mu(A)\\
	= &\frac 1 T\int_0^T \Pp^\mu(X_t \in A) \,\dup t +\mu(A)=2\mu(A).\end{split}
\end{equation}
 Define the function $f_A: \real^d \to \real$ by
\begin{gather*}
	f_A(z) := \I_A(z) - \mu(A),\quad z \in \real^d.
\end{gather*}
Clearly, $\mu(f_A)=0$ and $\|f_A \|_\infty \leq 1$. A simple computation gives
\begin{gather*}
	\Var_\mu (f_A)
	= \|f_A\|_{L^2(\mu)}^2 = \mu(A)(1-\mu(A))
	\leq \mu(A).
\end{gather*}
Together with
\begin{gather*}
	\Ee^\mu \left[|\mu_T(A)-\mu(A)| \right]
	= \Ee^\mu \left[ \left| \frac1{T} \int_0^T f_A(X_t)\,\dup t \right| \right],
\end{gather*}
we deduce from \eqref{moment2} and \eqref{moment1} that, under \textbf{\upshape(H2)},
\begin{gather*}
	\Ee^\mu \left[ |\mu_T(A)-\mu(A)| \right]
	\lesssim T^{-\frac 1 2} \mu(A)^{\frac 1 4} + T^{-1};
\end{gather*}
while under \textbf{\upshape(H3)},
\begin{gather*}
	\Ee^\mu \left[ |\mu_T(A)-\mu(A)| \right]
	\lesssim T^{-\frac 1 2} \mu(A)^{\frac 1 2} + T^{-1}.
\end{gather*}
Combining this with \eqref{I}, we obtain
\begin{gather*}
	\Ee^\mu \left[ |\mu_T(A)-\mu(A)| \right]
	\lesssim
	\begin{cases}
		\min\left\{\mu(A),\, T^{-\frac12}\mu(A)^{\frac14}\right\} & \text{under\ \ } \textbf{(H2)},\\[4pt]
		\min\left\{\mu(A),\, T^{-\frac12}\mu(A)^{\frac12}\right\} & \text{under\ \ } \textbf{(H3)},
	\end{cases}
\end{gather*}
where we use the fact that $0\leq \mu(A) \leq 1$. This, together with Lemma \ref{Wp}, yields
\begin{gather} \label{Wp-I}
	\Ee^\mu \left[\Tcal_p (\mu_T,\mu)\right]
	\lesssim \sum_{n\ge 0} 2^{pn} \sum_{\ell \ge 0} 2^{-p\ell} \sum_{F\in\Pcal_\ell}
	\min\left\{\mu(2^n F \cap B_n),\: T^{-\frac 1 2} \mu(2^n F \cap B_n)^\alpha \right\},
\end{gather}
where $\alpha = 1/4$ (respectively, $1/2$) under \textbf{(H2)} (respectively, \textbf{(H3)}).

 For every $\alpha \in (0, 1)$ and $T \ge 2$, define
\begin{gather*}
	\Kcal_\alpha (T)
	:= \sum_{n\ge 0} 2^{pn} \sum_{\ell\ge 0}2^{-p\ell} \sum_{F\in\Pcal_\ell}
	\min\left\{\mu(2^n F\cap B_n),\: T^{-\frac12} \mu(2^n F\cap B_n)^{\alpha} \right\}.
\end{gather*}

The following lemma gives an upper bound for  $\Kcal_{\alpha}(T)$.
\begin{lemma} \label{alpha}
	Let $\Kcal_\alpha(T)$, $\alpha$, $p$ and $q$ as above, and define
	\begin{gather*}
		\gamma := \gamma(\alpha, p, d) := \max \left\{ \alpha, 1 - \frac{p}{d}   \right\} \in (0, 1).
	\end{gather*}
	Then,  for all $T>2$,
	\begin{gather*}
		\Kcal_{\alpha}(T)
		\lesssim T^{-\frac{1 - \max\{\gamma, \, p/q\}}{2(1 - \alpha)}} \cdot \left(\I_{\{\gamma q = p \}} \log T  +   \I_{\{\gamma q \ne p \}} \right) \cdot \left(\I_{\{ p = (1 - \alpha)d \}} \log T  +   \I_{\{ p \ne (1 - \alpha)d \}} \right).
	\end{gather*}
\end{lemma}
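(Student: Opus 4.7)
The plan is to reduce the triple sum defining $\Kcal_\alpha(T)$ to a deterministic bound and extract its asymptotics via a case analysis on two naturally arising thresholds.

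First, I would control the per-cell sums by two elementary inequalities. Since $\bigsqcup_{F \in \Pcal_\ell} (2^n F \cap B_n) = B_n$, the moment assumption combined with Markov's inequality yields $\sum_{F \in \Pcal_\ell} \mu(2^n F \cap B_n) = \mu(B_n) \les 2^{-qn}$. Concavity of $x \mapsto x^\alpha$ (Jensen's inequality) together with $\#\Pcal_\ell = 2^{d\ell}$ gives $\sum_{F \in \Pcal_\ell} \mu(2^n F \cap B_n)^\alpha \le 2^{d\ell(1-\alpha)} \mu(B_n)^\alpha \les 2^{d\ell(1-\alpha) - q\alpha n}$. Using the elementary bound $\sum_F \min\{a_F, b_F\} \le \min\{\sum_F a_F, \sum_F b_F\}$, the lemma reduces to showing the claim for
\begin{gather*}
\Kcal_\alpha(T) \les \sum_{n \ge 0} 2^{pn} \sum_{\ell \ge 0} 2^{-p\ell} \min\bigl\{2^{-qn},\; T^{-\frac12} 2^{d\ell(1-\alpha) - q\alpha n}\bigr\}.
\end{gather*}

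Next I would identify the threshold $\ell^*(n) := \frac{1}{d}\bigl(\frac{\log_2 T}{2(1-\alpha)} - qn\bigr)$ at which the two arguments of the min coincide, together with $n_0 := \frac{\log_2 T}{2q(1-\alpha)}$, so that $\ell^*(n_0) = 0$. For $n \ge n_0$ the minimum equals $2^{-qn}$ uniformly in $\ell$, the $\ell$-sum is a convergent geometric series, and summing in $n$ contributes at most $O\bigl(T^{-(q-p)/(2q(1-\alpha))}\bigr)$. For $n < n_0$, splitting $\sum_\ell$ at $\ell^*(n)$, the tail $\ell \ge \ell^*$ contributes $\sim 2^{-p\ell^*(n)} 2^{-qn}$, whereas the head $0 \le \ell < \ell^*$ has summand $T^{-1/2} 2^{-q\alpha n} \cdot 2^{\ell[d(1-\alpha)-p]}$. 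The size of this head sum depends on the sign of $d(1-\alpha) - p$: it is $O(T^{-1/2} 2^{-q\alpha n})$ when $p > (1-\alpha)d$, of order $\ell^* \, T^{-1/2} 2^{-q\alpha n}$ when $p = (1-\alpha)d$ (producing one log factor), and $O(2^{-p\ell^*(n)} 2^{-qn})$ when $p < (1-\alpha)d$ (using that the boundary value at $\ell^*$ matches the tail leading term).

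Multiplying by $2^{pn}$ and summing over $n < n_0$, each regime leaves a geometric series in $n$ with exponent $p - q\tau$ for some $\tau \in \{\alpha, 1-p/d\}$. Whether this series is bounded, logarithmic, or grows up to $n_0$ (in the last case matching the $n \ge n_0$ contribution) is dictated by the sign of $p - q\tau$, equivalently by whether $q$ exceeds, equals, or falls short of $p/\tau$. Collecting the worst exponent across all cases yields the unified factor $T^{-(1-\max\{\gamma,\, p/q\})/(2(1-\alpha))}$ with $\gamma = \max\{\alpha, 1-p/d\}$, and the second log factor appears precisely at the critical threshold $\gamma q = p$. The main obstacle is the bookkeeping: three regimes of $p$ vs $(1-\alpha)d$ crossed with three regimes of $q$ relative to the two critical thresholds $p/\alpha$ and $dp/(d-p)$. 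The saving grace is that at $\ell^*$ the two arguments of the min coincide, forcing the head and tail of the $\ell$-sum to balance; this, together with the match between the $n < n_0$ growth and the $n \ge n_0$ decay at the supercritical boundary, collapses the nine nominal cases into the single unified expression.
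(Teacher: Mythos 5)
Your proposal matches the paper's own proof step for step: the same Hölder/Jensen bound to control $\sum_F \mu(2^n F\cap B_n)^\alpha$ by $2^{(1-\alpha)d\ell}\mu(B_n)^\alpha$, the same Markov bound $\mu(B_n)\lesssim 2^{-qn}$, the same threshold $\ell^*(n)=\tfrac1d\bigl(\tfrac{\log_2 T}{2(1-\alpha)}-qn\bigr)$ (the paper calls it $L$) and cutoff $n_0$, and the same three-by-three case analysis in the sign of $(1-\alpha)d-p$ and the sign of $p-q\tau$. The only cosmetic difference is that you keep the $2^{-qn}$ inside the $\min$ while the paper factors it out first, but the exponents in every regime coincide, so this is the same argument in a slightly different presentation.
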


\begin{proof}
First, for each $n, \ell \in \nat$,
\begin{align*}
	\sum_{F\in\Pcal_\ell} &\min\left\{\mu(2^n F\cap B_n), T^{-\frac12} \mu(2^n F\cap B_n)^{\alpha} \right\}\\
	&\le \min \left\{ \sum_{F\in\Pcal_\ell} \mu(2^n F\cap B_n), \sum_{F\in\Pcal_\ell} T^{-\frac12} \mu(2^n F\cap B_n)^{\alpha} \right\}\\
	&= \min \left\{ \mu(B_n), \sum_{F\in\Pcal_\ell} T^{-\frac12} \mu(2^n F\cap B_n)^{\alpha} \right\}.
\end{align*}
H\"older's inequality shows
\begin{gather*}
	\sum_{F\in\Pcal_\ell}\mu(2^n F\cap B_n)^{\alpha}
	\leq (\#\Pcal_\ell)^{1-\alpha} \left(\sum_{F\in \Pcal_\ell}\mu(2^n F\cap B_n)\right)^{\alpha}
	=  2^{ (1-\alpha) d  \ell} \mu(B_n)^{\alpha}.
\end{gather*}
Hence, we obtain
\begin{gather*}
	\Kcal_{\alpha}(T)
	\leq \sum_{n\ge 0} 2^{pn} \sum_{\ell \ge 0}2^{-p\ell} \min\left\{\mu( B_n), T^{-\frac12} 2^{ (1-\alpha)d\ell} \mu(B_n)^{\alpha} \right\}.
\end{gather*}
Since  $\mu(|\cdot|^q)<\infty$, we can use the Markov inequality to get that for all $n \in \nat$
\begin{gather*}
	\mu(B_n) \les 2^{-qn}.
\end{gather*}
Therefore,
$$
	\Kcal_{\alpha}(T)
	\lesssim \sum_{n\ge 0} 2^{-(q - p)n} \sum_{\ell \ge 0}2^{-p\ell} \min\left\{1,  T^{-\frac12}  2^{ (1-\alpha) (d \ell + q n) } \right\}.
$$
Set $L := \frac1{d} \left(\frac{1}{2(1 - \alpha)}\log_2(T) - q n   \right)$. If $L < 0$, i.e.,\ if $n > \frac{ \log_2(T)}{2(1 - \alpha)q}$, then
\begin{gather*}
	\sum_{\ell \ge 0} 2^{-p\ell} \min\left\{1,  T^{-\frac12}  2^{ (1-\alpha) (d \ell + q n) } \right\}
	\lesssim 1.
\end{gather*}
If $L \geq 0$, i.e.,\ if $n \leq \frac{\log_2(T)}{2(1 - \alpha)q}$, then
\begin{align*}
	\sum_{\ell \ge 0} 2^{-p\ell} \min\left\{1,  T^{-\frac12}  2^{ (1-\alpha) (d \ell + q n) } \right\}
	&= T^{-\frac12} 2^{(1 - \alpha)qn}\sum_{0\le \ell \le L}2^{[(1 - \alpha) d - p]\ell} + \sum_{\ell > L} 2^{-p\ell}\\
	& \lesssim
	\begin{cases}
  		T^{-\frac12} 2^{(1 - \alpha)qn}   				& \text{if\ \ } (1 - \alpha) d < p, \\
  		T^{-\frac12} (\log T) 2^{(1 - \alpha)qn}     	& \text{if\ \ } (1 - \alpha) d = p, \\
  		T^{-\frac{p}{2(1 - \alpha) d}} 2^{\frac{pq}{d}n}& \text{if\ \ } (1 - \alpha) d > p.
   \end{cases}
\end{align*}
We distinguish
among three cases.

\noindent
\textbf{Case (i): $(1 - \alpha) d < p$.} We have
\begin{align*}
	\Kcal_{\alpha}(T)
	\lesssim T^{-\frac12} \sum_{0 \le n \le \frac{\log_2(T)}{2(1 - \alpha)q}} 2^{(p - \alpha q)n} + \sum_{n > \frac{\log_2(T)}{2(1 - \alpha)q}} 2^{-(q -p)n}
	\les
	\begin{cases}
  		T^{-\frac12}   			& \text{if\ \ } \alpha q > p, \\
  		T^{-\frac12} \log T     & \text{if\ \ } \alpha q = p, \\
  		T^{-\frac{q - p}{2(1- \alpha)q}}  & \text{if\ \ } \alpha q < p.
   \end{cases}
\end{align*}

\noindent
\textbf{Case (ii): $(1 - \alpha) d = p$.} We have
\begin{align*}
	\Kcal_{\alpha}(T)
	& \lesssim T^{-\frac12} (\log T) \sum_{0 \le n \le \frac{\log_2(T)}{2(1 - \alpha)q}} 2^{(p - \alpha q)n} + \sum_{n > \frac{\log_2(T)}{2(1 - \alpha)q}} 2^{-(q -p)n}\\
	& \les
	\begin{cases}
  		T^{-\frac12} \log T  			& \text{if\ \ } \alpha q > p, \\
  		T^{-\frac12} (\log T)^2      	& \text{if\ \ } \alpha q = p, \\
  		T^{-\frac{q - p}{2(1- \alpha)q}} \log T  & \text{if\ \ } \alpha q < p.
   \end{cases}
\end{align*}

\noindent
\textbf{Case (iii): $(1 - \alpha) d > p$.} We have
\begin{align*}
	\Kcal_{\alpha}(T)
	& \lesssim T^{-\frac{p}{2(1 - \alpha) d}} \sum_{0 \le n \le \frac{\log_2(T)}{2(1 - \alpha)q}} 2^{[p - (1 - \frac{p}{d})q]n} + \sum_{n > \frac{ \log_2(T)}{2(1 - \alpha)q}} 2^{-(q -p)n}\\
	& \les
	\begin{cases}
  		T^{-\frac{p}{2(1 - \alpha) d}}   		& \text{if\ \ } q > \frac{dp}{d - p}, \\
  		T^{-\frac{p}{2(1 - \alpha) d}} \log T   & \text{if\ \ } q = \frac{dp}{d - p}, \\
  		T^{-\frac{q - p}{2(1- \alpha)q}}  		& \text{if\ \ } q < \frac{dp}{d - p}.
   \end{cases}
\end{align*}
The assertion of Lemma~\ref{alpha} follows from these three cases.
\end{proof}

Finally, Theorem~\ref{IP} (resp.\ Theorem~\ref{S}) follows from \eqref{Wp-I} and Lemma \ref{alpha} with $\alpha = 1/4$ assuming \textbf{(H2)} (resp.\ $\alpha = 1/2$ assuming \textbf{(H3)}).

\section{Proof of Theorem \ref{thm3}}\label{sec-proof-as}

We continue to use the notation introduced in the proof of Theorem~\ref{D}. We use \eqref{WP-2} and Lemma \ref{Wp} with $T \ge 2$ and $\epsilon = \epsilon(T) :=  \entier{T}^{-\theta} \simeq T^{-\theta}$. The parameter $\theta$ will be chosen in the course of the proof. Then,
\begin{gather}\label{C}\begin{split}
	 \Tcal_p(\mu_T,\mu)
	\lesssim &T^{-p\theta}
	+ \sum_{n \ge 0} 2^{pn} \sum_{\ell \ge 0} 2^{-p\ell} \sum_{F \in \mathcal P_\ell}
	\left| \left( \mu_T*\Lscr_{\epsilon \xi} \right) (2^n F\cap B_n)- \left( \mu*\Lscr_{\epsilon \xi} \right)(2^n F\cap B_n) \right|.\end{split}
\end{gather}
Let $N = N(T) := \entier{\kappa  \log_2 \entier{T}}$, where the parameter $\kappa > 0$ which will be chosen later on.  In particular, $2^{N} \simeq T^\kappa$. Set
\begin{equation}
\label{J1T}
	\Jcal_1(T)
	:= \sum_{0\le n \le N } 2^{pn} \sum_{\ell \ge 0} 2^{-p\ell} \sum_{F \in \mathcal P_\ell} \left| \left( \mu_T*\Lscr_{\epsilon \xi} \right) (2^n F\cap B_n)- \left( \mu*\Lscr_{\epsilon \xi} \right)(2^n F\cap B_n) \right|\end{equation} and
\begin{equation}\label{J2T}
	\Jcal_2(T)
	:= \sum_{n > N } 2^{pn} \sum_{\ell \ge 0} 2^{-p\ell} \sum_{F \in \mathcal P_\ell} \left| \left( \mu_T*\Lscr_{\epsilon \xi} \right) (2^n F\cap B_n)- \left( \mu*\Lscr_{\epsilon \xi} \right)(2^n F\cap B_n) \right|.
\end{equation}
We are going to bound  $\Jcal_1(T)$  and  $\Jcal_2(T)$  separately.

\noindent\textbf{Estimating  $\Jcal_1(T)$:}
For every $\epsilon>0$ and $x \in \real^d$, define
\begin{gather*}
	\phi_\epsilon(x, z)
	:= \rho_{\epsilon}(x- z) - \mu(\rho_\epsilon(x- \cdot)), \quad z \in \real^d.
\end{gather*}
By \eqref{f-eps}, we have for every Borel set $A\subseteq \real^d$
\begin{align*}
	\left| \left( \mu_T*\Lscr_{\epsilon \xi} \right) (A)- \left( \mu*\Lscr_{\epsilon \xi} \right)(A) \right|
	&= \left| \int_A  \frac 1 T\int_0^T\phi_\epsilon(x,X_t) \,\dup t  \, \dup x \right|\\
	&\leq \int_A \left| \frac 1 T\int_0^T\phi_\epsilon(x,X_t)\,\dup t \right|  \dup x.
\end{align*}
Using that $\bigcup_{F\in\Pcal_\ell}(2^n F\cap B_n) = B_n$ is a union of disjoint sets, we have
\begin{gather}\label{I1}\begin{split}
	\Jcal_1(T)
	&\leq \sum_{n=0}^{N} 2^{pn}\sum_{\ell \ge 0} 2^{-p\ell}\sum_{F\in\Pcal_\ell} \int_{2^n F \cap  B_n}
	\left|\frac 1 T\int_0^T\phi_\epsilon(x,X_t)\,\dup t\right| \dup x\\
	& =  (1 - 2^{-p})^{-1} \sum_{n=0}^{N} 2^{pn}\int_{B_n}\left|\frac 1 T\int_0^T\phi_\epsilon(x,X_t)\,\dup t\right|\dup x.
\end{split}\end{gather}
Define
\begin{gather*}
	\Phi(T) := \sum_{n=0}^{N} 2^{pn}\int_{B_n}\left|\frac 1 T\int_0^T\phi_\epsilon(x,X_t)\,\dup t\right|\dup x.
\end{gather*}
We use Bernstein's inequality (Theorem \ref{tail}\eqref{Bern-IP}) to estimate $\Phi(T)$. We subdivide the cube $[-2^{N}, 2^{N}]^d$ into disjoint smaller cubes with   side-length $h = h(T)$,  where  $h$ is the reciprocal of an integer. The exact value of $h$ will be chosen later.  Let $\mathcal Q_n$ denote the collection of all cubes from the above partition that are contained in $B_n$; clearly  $\# \mathcal Q_n = \vol (B_n) / h^d$.  Set $\mathcal Q :=\cup_n \mathcal Q_n$. Denote by $x_Q\in\real^d$ the centre of the cube $Q \in \mathcal Q$. Then
\begin{gather} \label{I1'}
\begin{split}
	\Phi(T)
	\leq &\sum_{n=0}^{N} 2^{pn} \sum_{Q \in \mathcal Q_n}\int_{Q} \left|\frac 1 T \int_0^T \phi_\epsilon(x_Q, X_t)\,\dup t\right|\dup x\\
	&+ \sum_{n=0}^{N} 2^{pn} \sum_{Q \in \mathcal Q_n}\int_{Q} \left|\frac 1 T\int_0^T\left( \phi_\epsilon(x, X_t) - \phi_\epsilon(x_Q, X_t) \right) \dup t\right| \dup x\\
	=&: \Phi_1 (T) + \Phi_2 (T).
\end{split}
\end{gather}
We begin with the estimate for $\Phi_2(T)$. Since for fixed $z \in \real^d$,
\begin{gather*}
	|\phi_\epsilon(x, z) - \phi_\epsilon(x', z)|
	\leq 2 \|\nabla \rho_\epsilon \|_\infty |x - x'|
	= 2 \|\nabla \rho \|_\infty \epsilon^{-(d+1)} |x - x'|,\quad x,x'\in \real^d,
\end{gather*}
we have
\begin{gather} \label{J2}
\begin{split}
	\Phi_2(T)
	&\leq 2 \|\nabla \rho \|_\infty \cdot \sum_{n=0}^{N} 2^{pn} \sum_{Q \in \mathcal Q_n} \int_{Q} \epsilon^{-(d+1)} |x - x_Q| \,\dup x
	\les \epsilon^{-(d+1)} h \sum_{n=0}^{N} 2^{(p+d)n}\\
	&\les \epsilon^{-(d+1)} h  2^{(p+d)N} \les h  T^{(p+d)\kappa  + (d+1)\theta}.
\end{split}
\end{gather}
We will now estimate $\Phi_1(T)$. From the definition of $\phi_{\epsilon}$, we see that for any $x\in\real^d$ and $\epsilon \in (0, 1)$,
\begin{gather*}
	\mu(\phi_{\epsilon}(x,\cdot))=0,
	\quad
	\|\phi_{\epsilon}\|_\infty\le 2\|\rho_\epsilon\|_\infty = 2\|\rho\|_\infty \epsilon^{-d}
\end{gather*}
and
\begin{align*}
	\Var_\mu(\phi_{\epsilon}(x,\cdot))
	& =  \| \phi_{\epsilon}(x,\cdot) \|_{L^2(\mu)}^2 \le \|\rho_\epsilon\|_\infty^2 \cdot \mu(B_\epsilon(x))\leq C_1 (1 + |x|)^{-q}  \epsilon^{-2d},
\end{align*}
where in the last line we used  $\mu(|\cdot|^q) < \infty$  and the fact that $1+|x|\leq 2(1+|y|)$ for all $y\in B_\epsilon(x)$. In order to keep notation simple,  we write $w(x) := 1+ |x|$ for $x\in \real^d$. Putting all the estimates above together and using Theorem \ref{tail}(i), we have for any $\delta>0$ and $T \ge 2$
\begin{align} \label{b1}
	\Pp^\mu \left( \left|\frac 1 T \int_0^T \phi_{\epsilon}(x, X_t)\,\dup t \right| \ge \delta   \right)
	\leq 2 \exp\left(-C_2 T \epsilon^{d} \min\left\{w(x)^\frac{q}{2} \epsilon^{d} \delta^2,  \delta\right\}   \right)
\end{align}
for a suitable constant  $C_2 = C_2(\lambda, \| \rho \|_\infty) > 0$.  Since $\vol(Q)=h^d$ and $w(x_Q)^p\simeq 2^{np}$ as $x_Q\in B_n$, we get
\begin{align} \label{b2}
	\Phi_1 (T)
	\les h^d \sum_{Q \in \mathcal Q} w(x_Q)^p  \left|\frac 1 T \int_{0}^{T} \phi_\epsilon(x_Q, X_t)\,\dup t\right|.
\end{align}
Now we use the elementary inequality  $\Pp(\sum_{i} Z_i \ge \sum_i u_i) \le \sum_i \Pp(Z_i \ge u_i)$, and combine it with \eqref{b1} and \eqref{b2} to get
\begin{gather*}
\begin{split}
	\Pp^\mu \left( \Phi_1 (T) \ge \delta_{T}  \right)
	&\leq \sum_{Q \in \mathcal Q} \Pp^\mu \left( \left|\frac 1 T \int_0^T \phi_{\epsilon}(x_Q, X_t)\,\dup t \right|\ge \frac{\alpha_Q \delta_{T}}{h^d w(x_Q)^p}    \right)\\
	&\leq 2\sum_{Q \in \mathcal Q} \exp\left(-C_2 T \epsilon^{d} \min\left\{ \frac{\epsilon^{d} \alpha_Q^2 \delta_{T}^2}{h^{2d} w(x_Q)^{2p-\frac{q}{2}}},  \frac{\alpha_Q \delta_{T}}{h^d w(x_Q)^p}\right\}   \right).
\end{split}
\end{gather*}
In the above calculation, $\{\alpha_Q \}_{Q\in \mathcal Q}\subset (0,1)$ is any positive sequence with $\sum_{Q \in \mathcal Q} \alpha_Q = 1$, and
the parameter $\delta_T > 0$ will be chosen later in such a way that $\lim_{T\to\infty}\delta_T = 0$. We will use the following $\alpha_Q$'s. Let $\{ \beta_n \}_{0 \le n \le N}\subset (0,1)$ such that $\sum_{n = 0}^{N} \beta_n = 1$, and define
\begin{gather*}
	\alpha_Q := \frac{\beta_n}{\# \mathcal Q_n} = \frac{\beta_n}{\vol(B_n) /h^d} \quad \text{for } Q \in \mathcal Q_n.
\end{gather*}
Then,
\begin{gather} \label{basic}
	\Pp^\mu \left( \Phi_1 (T) \ge \delta_{T}  \right)
	\les h^{-d} T^{d \kappa} \cdot \max_{1 \le n \le N} \exp\left(-C_3  T \cdot \min\left\{ \frac{(\epsilon^{d} \delta_{T})^2 \beta_n^2 }{2^{(2p+ 2d - \frac{q}{2})n}},  \frac{\epsilon^{d} \delta_T \beta_n }{2^{(p+d)n}}\right\}   \right),
\end{gather}
where we used that $\vol(B_n) \simeq 2^{dn}$ and $w(x_Q) \simeq 2^n$ for $Q \in \mathcal Q_n$. Set $\zeta := \epsilon^d \delta_T \in (0, 1)$. By \eqref{basic}, we have
\begin{gather} \label{J1}
	\Pp^\mu \left( \Phi_1 (T) \ge \delta_{T}  \right)
	\les h^{-d} T^{d \kappa} \cdot  \exp\left(-C_3  T \cdot K(\zeta)   \right),
\end{gather}
where
\begin{gather*}
	K(\zeta)
	:= \max_{\beta_n \, : \sum_{n = 1}^{N} \beta_n = 1} \, \min_{1 \le n \le N}
	\min\left\{ \frac{\zeta^2 \beta_n^2 }{2^{(2p+ 2d - \frac{q}{2})n}},  \frac{\zeta \beta_n }{2^{(p+d)n}}\right\}.
\end{gather*}
We will now determine the order of magnitude of $K(\zeta)$. From its definition, we see that there exist positive numbers $\{ \beta_n \}_{0 \le n \le N}$ with $\sum_{n = 0}^{N} \beta_n = 1$ such that, for each $n = 0, \ldots, N$,
\begin{gather*}
	\frac{\zeta^2 \beta_n^2 }{2^{(2p+ 2d - \frac{q}{2})n}} \geq K(\zeta)
	\quad\text{and}\quad
	\frac{\zeta \beta_n }{2^{(p+d)n}} \ge K(\zeta).
\end{gather*}
Therefore,
\begin{gather*}
	\beta_n
	\geq \zeta^{-1} \max \left\{2^{(p+ d - \frac{q}{4})n} \sqrt{K(\zeta)},\: 2^{(p+d)n} K(\zeta)   \right\}.
\end{gather*}
Using the fact that $\sum_{n=1}^N \beta_n= 1$ yields
\begin{gather} \label{restriction}
	\sum_{n = 0}^{N} 2^{(p+d)n} \cdot \max \left\{2^{ - \frac{q}{4} n} \sqrt{K(\zeta)},\: K(\zeta)   \right\}
	\leq \zeta.
\end{gather}
It would be helpful to write the left-hand side of the previous inequality by using the  function $S : [0, \infty) \to \real$, which is defined as
\begin{gather*}
	S (r) := \sum_{n = 0}^{N} 2^{(p+d)n} \cdot \max \left\{ 2^{-\frac{q}{4}n}\sqrt{r},\: r\right\}.
\end{gather*}
Since  the function  $r\mapsto S(r)$ is strictly increasing, the equation $S(r) = \zeta$ has a unique solution, which we denote by $r_*$. In view of \eqref{restriction} we have $K(\zeta) \le r_*$. Furthermore, we see that $K(\zeta) = r_*$, if we choose $\beta_n^* := \zeta^{-1} 2^{(p+d)n} \cdot \max \left\{2^{\frac{q}{4} n} \sqrt{r_*},\,  r_*   \right\}$. Set
\begin{gather*}
	s := \min \left\{ \big\lfloor 2/q \log_2({r_*}^{-1}) \big\rfloor, N   \right\} \in \{0, \ldots, N   \}.
\end{gather*}
Then,
\begin{gather*}
	S (r_*)
	=  \sum_{n = 0}^{s} 2^{(p+d -\frac{q}{4})n} \sqrt{r_*} +  \sum_{n = s+1}^{N} 2^{(p+d)n} r_*
	= \zeta,
\end{gather*}
where we use the ``empty sum convention'', i.e.\ $\sum_{N+1}^N := 0$.  Consequently,
\begin{align}\label{max}
	\max \left\{ \sum_{n = 0}^{s} 2^{(p+d -\frac{q}{4})n} \sqrt{r_*}, \:  \sum_{n = s+1}^{N} 2^{(p+d)n} r_*  \right\}
	\simeq \zeta.
\end{align}
A   elementary  computation reveals that
\begin{gather} \label{ppp}
	K(\zeta)=r_*
	\simeq
	\begin{cases}
		\min\left\{ \zeta^2, \, 2^{-(p+d) N} \zeta\right\}
		& \text{if\ \ } p + d < \frac{q}{4},\\[6pt]
		\min\left\{ \frac{\zeta^2}{(\log ( \zeta^{-1}) \wedge N )^2}, \, 2^{-(p+d) N} \zeta    \right\}
		& \text{if\ \ } p + d = \frac{q}{4}, \\[6pt]
		\min\left\{ 2^{-(2p + 2d - \frac{q}{2}) N} \zeta^2, \, 2^{-(p+d) N} \zeta\right\}
		&\text{if\ \ } p + d \in (\frac{q}{4}, \frac{q}{2}], \\[6pt]
		2^{-(2p + 2d - \frac{q}{2}) N} \zeta^2
		&\text{if\ \ } p + d > \frac{q}{2}.
	\end{cases}
\end{gather}
Exemplarily, let us deal with the case $p+d > \frac{q}4$ in \eqref{ppp}; the other regimes follow similarly. If $s < N$, that is, if $r_* > 2^{-\frac{q}{2}N}$, then
\begin{gather*}
	\sum_{n = 0}^{s} 2^{(p+d -\frac{q}{4})n} \sqrt{r_*}
	< \sum_{n = 0}^{N} 2^{(p+d -\frac{q}{4})n} \sqrt{r_*}
	\les 2^{(p+d -\frac{q}{4})N} \sqrt{r_*}
	\les 2^{(p+d )N} r_*,
\end{gather*}
so that
\begin{gather*}
	\max \left\{ \sum_{n = 0}^{s} 2^{(p+d -\frac{q}{4})n} \sqrt{r_*}, \:  \sum_{n = s+1}^{N} 2^{(p+d)n} r_*  \right\}
	\simeq \sum_{n = s+1}^{N} 2^{(p+d)n} r_*
	\simeq 2^{(p+d )N} r_*.
\end{gather*}
If $s = N$, that is, if $r_* \le 2^{-\frac{q}{2}N}$, then
\begin{gather*}
	\max \left\{ \sum_{n = 0}^{s} 2^{(p+d -\frac{q}{4})n} \sqrt{r_*}, \:  \sum_{n = s+1}^{N} 2^{(p+d)n} r_*  \right\}
	= \sum_{n = 0}^{s} 2^{(p+d -\frac{q}{4})n} \sqrt{r_*}
	\simeq 2^{(p+d -\frac{q}{4})N} \sqrt{r_*}.
\end{gather*}
Combining these two cases with \eqref{max}, we immediately obtain \eqref{ppp}.

Recall that $N=N(T)$ was chosen in such a way that  $2^{N} \simeq T^\kappa$. In order to apply the Borel--Cantelli lemma, we need to find $\zeta(T)$ such that $K(\zeta) = C T^{-1} \log T$ for a sufficiently large $C$. Since $\epsilon(T) \simeq T^{-\theta}$ and $\zeta = \epsilon^d \delta_T$, it follows from \eqref{ppp} that  the choice
\begin{gather} \label{pppp}
	\delta_T
	:=
	\begin{cases}
	T^{-[1 - (p+d)\kappa - d\theta]} \log T
	& \begin{aligned}
		&\text{if\ \ } p+d \le \tfrac{q}{2},\;  p + d \ne \tfrac{q}{4},\; \min\{p+d, \tfrac{q}{4} \} \kappa \ge \tfrac12,\\
		&\,\,\,\,\,\,\text{or\ \ } p+d = \tfrac{q}{4}, (p+d)\kappa> \tfrac12;
	\end{aligned}\\[10pt]
		T^{-\frac12 + d\theta} (\log T)^{\frac32}
	& \,\text{if\ \ } p + d = \frac{q}{4},\; (p+d)\kappa \le \frac12;\\[10pt]
		T^{-[\frac12 -(p+d - \frac{q}4)_+ \kappa - d\theta]} (\log T)^{\frac12}
	&\begin{aligned}
		&\text{if\ \ } p+d \le \tfrac{q}{2}, \;  p + d \ne \tfrac{q}{4},\; \min\{p+d, \tfrac{q}{4} \} \kappa < \tfrac12,\\
		&\,\,\,\,\,\,\text{or\ \ } p+d > \tfrac{q}{2}
	\end{aligned}
\end{cases}
\end{gather}
achieves $K(\epsilon^d \delta_T) \simeq T^{-1} \log T$. Of course, the parameters $\kappa, \theta >0$ have to be sufficiently small, so that all powers of $T$ appearing in the expression for $\delta_T$ are negative.

Once $\kappa$ and $\theta$ are fixed,  we can choose $h = h(T) = \entier{T}^{-\upsilon}$ in \eqref{J2}  with a sufficiently large integer $\upsilon>0$, which ensures that
\begin{gather} \label{J2''}
	\Phi_2(T)
	= o(\delta_T) \quad\text{as\ }  T \to \infty.
\end{gather}
As $h=h(T)$ is determined, we can combine \eqref{J1} and \eqref{ppp} with \eqref{pppp}, and find a constant $C_4>0$ such that
\begin{gather} \label{J1'}
	\Pp^\mu \left( \Phi_1(T) \ge C_4 \delta_T \right)
	\les T^{-2},
\end{gather}
where $\delta_T$ is defined by \eqref{pppp}.

Set $T := T_k := k$ for $ k\in \nat$ with $k\ge 2$. From \eqref{I1'}, \eqref{J2''}, \eqref{J1'} and the Borel-Cantelli lemma, we obtain
\begin{align} \label{as1}
	\limsup_{k \to \infty} \frac{\Phi(T_k)}{\delta_{T_k}}
	\leq M \quad\text{a.s.}
\end{align}
for some constant $M>0$. If $T \ge 2$ is not an integer, then, by construction, $N(T) = N(\entier{T})$ and $\epsilon(T) = \epsilon(\entier{T})$. Because of the definition of $\Phi$,
\begin{gather}
\begin{split}\label{as2}
	\Phi(T)
	\leq &\frac{\entier{T}}{T} \sum_{n=0}^{N} 2^{pn}\int_{B_n}\left|\frac 1 {\entier{T}} \int_0^{\entier{T}}\phi_\epsilon(x,X_t)\,\dup t\right| \dup x + \frac{1}{T} \sum_{n=0}^{N} 2^{pn}\int_{B_n}\left| \int_{\entier{T}}^{T} \phi_\epsilon(x,X_t)\,\dup t\right|\dup x\\
	\lesssim &\frac{\entier{T}}{T} \Phi(\entier{T}) + \frac{1}{T} \cdot \sum_{n = 0}^{N} 2^{(p+d) n} \| \phi_\epsilon \|_\infty\\ \leq &\Phi(\entier{T}) + O(T^{-[1  - (p + d)\kappa -  d\theta]}).
\end{split}
\end{gather}
Combining \eqref{I1}, \eqref{as1} and \eqref{as2}, we deduce that there is some constant $M'>0$ such that
\begin{align} \label{A}
\limsup_{T \to \infty} \frac{\Jcal_1(T)}{\delta_T + T^{-[1 - (p + d)\kappa -  d\theta ]}} \le M' \, \, \text{ a.s.}
\end{align}

\medskip\noindent
\textbf{Estimating  $\Jcal_2(T)$.}
Now we bound the term $\Jcal_2(T)$ from \eqref{J2T}. It follows from \eqref{pmu} that, for any Borel set $A \subseteq \real^d$ and $\epsilon \in (0, 1)$,
\begin{align*}
\big| \left( \mu_T*\Lscr_{\epsilon \xi} \right) (A)- \left( \mu*\Lscr_{\epsilon \xi} \right)(A) \big| \le \frac1T \int_{0}^{T} \Pp (X_t + \epsilon \xi \in A) \, \dup t + \Pp(X+\epsilon \xi \in A),
\end{align*}
where $X$ denotes a random variable that is independent of $\xi$ and has the law $\mu$. Then, for each $n, \ell \in \nat$ and $\epsilon \in (0, 1)$,
\begin{gather}
\begin{split} \label{sum}
& \sum_{F \in \mathcal P_\ell} \big| \left( \mu_T*\Lscr_{\epsilon \xi} \right) (2^n F\cap B_n)- \left( \mu*\Lscr_{\epsilon \xi} \right)(2^n F\cap B_n) \big|\\
& \le \sum_{F \in \mathcal P_\ell} \frac1T \int_{0}^{T} \Pp (X_t + \epsilon \xi \in 2^n F\cap B_n) \, \dup t + \sum_{F \in \mathcal P_\ell} \Pp(X+\epsilon \xi \in 2^n F\cap B_n)\\
& = \frac1T \int_{0}^{T} \Pp (X_t + \epsilon \xi \in B_n) \, \dup t + \Pp(X+\epsilon \xi \in B_n) \le \frac1T \int_{0}^{T} \I_{\{X_t \in B_n^\epsilon  \}} \, \dup t + \mu(B_n^\epsilon),
\end{split}
\end{gather}
where we used that if $X_t \notin B_n^\epsilon$ then $\Pp (X_t + \epsilon \xi \in B_n) = 0$, and so $\Pp\left(X_t + \epsilon \xi \in B_n\right) \leq \I_{\left\{X_t\in B_n^\epsilon\right\}}$.

Recall that $\epsilon(T) \simeq T^{-\theta}$ and $N(T) \simeq \log_2 T$.  If $T\gg 1$ is sufficiently large, then there exists a constant $c>0$ such that for all $n > N(T)$ and $x \in B_n^\epsilon$,
\begin{gather*}
	c\, 2^n \leq |x| \leq c^{-1} 2^n.
\end{gather*}
Combining this with \eqref{sum}, we obtain for sufficiently large $T$ there exists $c'>0$ such that
\begin{gather} \label{J2'}
\begin{split}
\Jcal_2(T) &\le (1 - 2^{-p})^{-1} \sum_{n > N} 2^{pn} \left( \frac1T \int_{0}^{T} \I_{\{X_t \in B_n^\epsilon  \}} \, \dup t + \mu(B_n^\epsilon) \right)\\
& \lesssim \sum_{n > N}\left\{  \frac1T \int_{0}^{T} |X_t|^p \, \I_{\{c\, 2^n \leq |X_t| \leq c^{-1} 2^n  \}}\, \dup t   + \mu \left(|\cdot|^p \I_{\{c\, 2^n \leq |\cdot| \leq c^{-1} 2^n  \}}\right) \right\}\\
& \les \frac1T \int_{0}^{T} |X_t|^p \, \I_{\{|X_t| \ge c' T^\kappa  \}}\, \dup t  + \mu \left(|\cdot|^p \I_{\{|\cdot| \ge c' T^\kappa  \}}\right) =: \Psi_1(T) + \Psi_2(T).
\end{split}
\end{gather}
In the last line we use the elementary estimate $\| \sum_{n\ge0} \I_{\left\{c\, 2^n \leq |\cdot| \leq c^{-1} 2^n\right\}} \|_\infty < \infty$.

The term $\Psi_2(T)$ is deterministic.  Since $\mu(|\cdot|^q) < \infty$, the Markov inequality shows that
\begin{align} \label{psi1}
\Psi_2 (T) \les T^{-(q - p)\kappa}.
\end{align}
Let us turn to $\Psi_1(T)$. Split this term into two parts
\begin{gather*}
\Psi_1 (T) = \frac1T \int_{0}^{2} |X_t|^p \, \I_{\{|X_t| \ge c' T^\kappa  \}}\, \dup t + \frac1T \int_{2}^{T} |X_t|^p \, \I_{\{|X_t| \ge c' T^\kappa  \}}\, \dup t =: \Psi_{1, 1} (T) + \Psi_{1, 2}(T).
\end{gather*}
Clearly,
\begin{gather*}
\Psi_{1, 1} (T) \les T^{-1 - (q - p)\kappa}  \int_{0}^{2} |X_t|^q \, \dup t.
\end{gather*}
Using the Tonelli-Fubini theorem,
\begin{align*}
\Ee^\mu \left[ \int_{0}^{2} |X_t|^q \, \dup t \right] = \int_{0}^{2} \Ee^\mu \left[  |X_t|^q\right] \,  \dup t < \infty,
\end{align*}
which implies that $\displaystyle\int_0^2 |X_t|^q\,\dup t<\infty$ almost surely. Thus,
\begin{align} \label{psi11}
\limsup_{T \to \infty} \frac{\Psi_{1, 1} (T)}{T^{-1 - (q - p)\kappa}}  < \infty \, \, \text{ a.s.}
\end{align}
Using the Tonelli-Fubini theorem once again, we get for any $\eta >1$
\begin{align*}
\Ee^\mu \left[ \int_{2}^{\infty} \frac{|X_t|^q}{t (\log t)^\eta} \, \dup t \right] = \int_{2}^{\infty} \frac{\Ee^\mu \left[  |X_t|^q\right]}{t (\log t)^\eta} \,  \dup t < \infty.
\end{align*}
Consequently, $\displaystyle\int_{2}^{\infty} \frac{|X_t|^q}{t (\log t)^\eta} \, \dup t < \infty$ almost surely. Since
\begin{gather*}
\Psi_{1, 2} (T) \les T^{- (q - p)\kappa} \cdot \frac1T \int_2^{T} |X_t|^q \, \dup t \les T^{- (q - p)\kappa} (\log T)^\eta \cdot  \int_2^{T} \frac{|X_t|^q}{t(\log t)^\eta} \, \dup t,
\end{gather*}
we deduce that for any $\eta > 1$,
\begin{align} \label{psi12}
\limsup_{T \to \infty} \frac{\Psi_{1, 2} (T)}{T^{- (q - p)\kappa} (\log T)^\eta}  < \infty \, \, \text{ a.s.}
\end{align}
 From  \eqref{J2'}--\eqref{psi12}, we obtain that for any $\eta > 1$,
\begin{gather} \label{B}
\limsup_{T \to \infty} \frac{\Jcal_2 (T)}{T^{- (q - p)\kappa} (\log T)^\eta}  < \infty \, \, \text{ a.s.}
\end{gather}

Combining \eqref{C},  \eqref{J1T}, \eqref{J2T} and  \eqref{A} with \eqref{B}, and optimizing in $\kappa, \theta>0$, we conclude that
\begin{gather*}
\limsup_{T \to \infty} \frac{\Tcal_p(\mu_T,\mu)}{R_\eta(T)}  < \infty \, \, \text{ a.s.},
\end{gather*}
where the rate function $R_\eta(T)$ is given by \eqref{rate1}. To see this, we deal with the case $p+d = \frac{q}{4}$; the remaining regimes follow with similar arguments. Indeed, it suffices to choose $\kappa, \theta>0$ so as to maximize
\begin{gather*}
	\min \left\{ 1 - \max \left\{ (p+d)\kappa,\, \frac12 \right\}- d \theta ,\:   (q - p) \kappa,\:  p \theta  \right\}.
\end{gather*}
 The optimal choice can be taken as $\kappa = \theta = \frac{1}{2(p+d)}$. This gives precisely the rate function \eqref{rate1}.

If assumption \textbf{(H2)} is replaced by \textbf{(H3)}, then the Bernstein inequality improves from the form of Theorem \ref{tail}\eqref{Bern-IP} to that in Theorem \ref{tail}\eqref{Bernstein}. Consequently, in \eqref{b1} $q/2$ should be replaced by $q$, causing the same change in \eqref{pppp}. The same argument, with \eqref{B} unchanged, now yields
\begin{gather*}
	\limsup_{T \to \infty} \frac{\Tcal_p (\mu_T,\mu)}{\widetilde R_\eta(T)}  < \infty \quad \text{a.s.}
\end{gather*}
where the rate function $\widetilde R_\eta(T)$ is given by \eqref{rate2}.

\section{Examples}
In this section, we present several examples to illustrate our main results and subsequently compare them with related results in the literature.

\subsection{Diffusions}

Consider the following SDE
\begin{gather}\label{SDE-L}
	\dup X_t = b(X_t) \, \dup t + \sigma(X_t) \, \dup W_t
\end{gather}
with measurable coefficients $b:\real^d\to\real^d$, $\sigma:\real^d \to \real^{d \times d}$ and a standard $d$-dimensional Brownian motion $(W_t)_{t\ge0}$. The recent paper \cite{CP} establishes convergence rates in the $\Wa_2$-distance assuming uniform dissipativity and Lipschitz continuity of the coefficients. The argument in \cite{CP} relies essentially on assumption \textbf{(H1)}, so Theorem~\ref{D} applies directly to that setting and yields, in addition, bounds in the $\Tcal_p$-distance for general $p>0$. In the following example, we consider a slightly different case where \textbf{(H1)} holds under a weaker long-distance dissipativity condition,  cf.\ (II).

\begin{example}\label{diff}
	 Assume that $\sigma(x)\in\real^{d\times d}$ is uniformly elliptic, i.e.,\ there is a constant $\theta > 0$ such that for all $x\in\real^d$,  $\sigma(x)\sigma^\top(x) \ge \theta \, \mathds{I}_{d\times d}$.  Write
	\begin{gather*}(\sigma \sigma^\top )(x) = \theta \, \mathds{I}_{d \times d} + (\hat \sigma \hat \sigma^\top)(x),\quad x\in\real^d,\end{gather*}
	where $\hat \sigma:\real^d \to \real^{d \times d}$ is measurable. Assume also that there are constants $L,\alpha_1,\alpha_2,R>0$ such that for any $x,y\in\real^d$,
	\begin{enumerate}
		\item[$({\rm I})$] $\|\hat \sigma(x)-\hat \sigma(y)\|_{\rm{HS}}\le L|x-y|,$
		\item[$({\rm II})$] $2\langle b(x)-b(y),x-y\rangle + \|\hat \sigma(x)-\hat \sigma(y)\|_{\rm{HS}}^2\le \phi(|x-y|)\, |x-y|,$
	\end{enumerate}
	where $\| \cdot \|_{\rm{HS}}$ denotes the Hilbert-Schmidt norm, and the function $\phi : [0, \infty) \to \real$ is defined by
	\begin{gather*}
		\phi(r)
		:= r \cdot \left[ \alpha_1  \, \I_{[0,R]}(r) + \left\{ \alpha_1 - (\alpha_1+\alpha_2) \left( \frac{r}{R} - 1 \right) \right\}
		 \I_{(R,2R]}(r) - \alpha_2  \, \I_{(2R,\infty)}(r) \right].
	\end{gather*}
Then, the SDE \eqref{SDE-L} admits a unique strong solution $(X_t)_{t\ge0}$, and has a unique  invariant probability measure $\mu$ satisfying $\mu(|\cdot|^q)<\infty$ for some $q > 2$. Moreover, we have for $\Ee^\mu[\Tcal_p(\mu_T,\mu)]$ and any $p<q$ the bound \eqref{D-W}.
\end{example}

\begin{proof}
	It is shown in \cite[Theorem 3.1 and Corollary 3.4]{HLM} that, under assumptions on the coefficients $b$ and $\sigma$ in the example, the SDE \eqref{SDE-L} has a unique strong  solution $(X_t)_{t\ge0}$, and has a unique  invariant probability measure $\mu$ such that \textbf{(H1)} is satisfied. In order to apply Theorem \ref{D}, we   need  to show only that  $\mu(|\cdot|^q)<\infty$ for some $q > 2$.
	
	For $q> 2$, set $g(x)=(1+|x|^2)^{\frac q 2}$. Then
	\begin{gather*} \nabla g(x)=  q (1+|x|^2)^{-1} g(x) \, x, \quad \nabla^2 g(x)=  q (1+|x|^2)^{-1} g(x) \, \left( \mathds{I}_{d\times d} + \frac{q-2 }{1+|x|^2} \, x x^\top \right),\end{gather*}
	Since  the generator of the solution of the SDE  \eqref{SDE-L} is
	\begin{gather*}
	\Lcal f (x) = \langle b(x), \nabla f (x)   \rangle  + \frac12 \text{Tr} \big( (\sigma \sigma^\top)(x) \nabla^2 f(x)  \big), \quad f \in C^2(\real^d),
	\end{gather*}
	a short computation yields
	\begin{equation*} \label{5.2}
		\begin{split}
			\Lcal g(x)&= q (1+|x|^2)^{-1} g(x) \cdot \left[ \langle b(x),x\rangle +\frac 1 2  \text{Tr}\big( (\sigma \sigma^\top)(x) \big)+\frac{q-2}{2(1+|x|^2)}  |\sigma^\top(x) x |^2 \right]\\
			&\le q (1+|x|^2)^{-1} g(x) \cdot \left[ \langle b(x),x\rangle +\frac {q - 1} 2  \|\hat \sigma(x)\|_{\text{HS}}^2 + \frac{\theta(q + d - 2)}{2} \right],
	\end{split}\end{equation*}
where  the last inequality is due to
\begin{gather*}
	\text{Tr}((\sigma \sigma^\top)(x))
	=  \theta d +\|\hat \sigma(x)\|_{\text{HS}}^2,
	\quad
	|\sigma^\top(x)x|^2
	\le \theta |x|^2 + \|\hat \sigma(x)\|_{\text{HS}}^2|x|^2.
\end{gather*}
	By the assumptions (I), (II) and the triangle inequality, for any $\epsilon \in (0, 1)$ and $x \in \real^d$,
	\begin{equation*}
		\begin{split}
			& \langle b(x),x\rangle +\frac {1} 2  \|\hat \sigma(x)\|_{\text{HS}}^2\\
 &\le \langle b(x), x \rangle  + \frac {1} 2 \big[ (1 + \epsilon) \|\hat \sigma(x)- \hat \sigma(0)\|_{\text{HS}}^2 + (1 + \epsilon^{-1}) \|\hat \sigma(0)\|_{\text{HS}}^2\big]\\
			& \le  \langle b(x)-b(0),x\rangle  +  \frac {1} 2 \|\hat \sigma(x)-\hat \sigma(0)\|_{\text{HS}}^2   +  \langle b(0),x \rangle  + \frac{\epsilon L^2}{2} |x|^2  + \frac{ (1 + \epsilon^{-1})}{2} \|\hat \sigma(0)\|_{\text{HS}}^2\\
			& \le \frac{\phi(|x|)|x|}{2} + \frac{\epsilon (1 +   L^2)}{2} |x|^2 +  \frac{\epsilon^{-1} |b(0)|^2}{2} + \frac{ 1 + \epsilon^{-1}}{2} \|\hat \sigma(0)\|_{\text{HS}}^2  ,
		\end{split}
	\end{equation*}
	and
	\begin{equation*} \label{5.4}
		\begin{split}
			\|\hat \sigma(x)\|_{\text{HS}}^2 &\le  (L |x| + \|\hat \sigma(0)\|_{\text{HS}} )^2 \le (1 + \epsilon)L^2 |x|^2 + (1 + \epsilon^{-1}) \|\hat \sigma(0)\|_{\text{HS}}^2.
		\end{split}
	\end{equation*}
 The above estimates    and  the definition of $\phi$ show that  for any $2 < q < 2 + \alpha_2 L^{-2} $,
	one can choose $\epsilon$ sufficiently small so that there exist constants $K_1, K_2 > 0$
so that for all $x\in \real^d$,
	\begin{equation*}
		\Lcal g(x) \le K_1 - K_2 \, g(x).
	\end{equation*}
Together  with the invariance of $\mu$, this implies that $\mu(|\cdot|^q)<\infty$.
\end{proof}

For the classical Ornstein--Uhlenbeck process, our approach yields the $p$-Wasserstein convergence rate for the empirical measure. The next example treats a more general case, which has originally been considered by Wang in \cite[Example 1.4]{W22}.

\begin{example}
	Suppose that
	\begin{gather}\label{SDE-OU}\sigma(x)=\mathds{I}_{d \times d}, \quad b(x)=-\kappa \alpha |x|^{\alpha-2}x+\nabla \varphi (x),\end{gather}
	where $\alpha>1$, $\kappa>0$, and $\varphi \in C^1(\real^d)$ satisfies $\|\nabla \varphi \|_\infty<\infty$. Then the SDE \eqref{SDE-L} has a unique strong solution $(X_t)_{t\ge0}$, and has a unique  invariant probability measure
	\begin{gather*}
		\mu(\dup x)
		:= \frac{1}{Z} \eup^{-\kappa |x|^\alpha+ \varphi (x)} \, \dup x,
		\qquad
		Z : = \int_{\real^d} \eup^{-\kappa |y|^\alpha+ \varphi (y)} \, \dup y.
	\end{gather*}
	Define, for
$R>1$, a smooth function $J(x)$ satisfying $J(x) = \eup^{|x|}$ for $|x|\ge R$ and $J(x)\ge 1$ for $|x|<R$. By $\|\nabla \varphi\|_\infty<\infty$ and $\alpha>1$, we can show that, for
$R>1$ large enough,
	\begin{gather*}
		\Lcal J \le -c_1 J + c_2 \I_{B_R(0)}
	\end{gather*}
	holds with some constants $c_1, c_2>0$,  where $\Lcal := \frac12 \Delta+ \langle b, \nabla\rangle $ is the generator of the process $(X_t)_{t\ge0}$ that is symmetric with respect to $\mu$. From Lyapunov's criterion (see, e.g., \cite[Theorem~4.6.2]{BGL14}), we can conclude that the assumption~\textbf{(H3)} is satisfied. Consequently, Theorem \ref{S} and Theorem \ref{thm3} apply, yielding bounds for $\Ee^\mu [\Tcal_p(\mu_T, \mu)]$ as well as almost sure bounds for $\Tcal_p(\mu_T, \mu)$.
\end{example}

When $p = 2$, by a different method, \cite[(1.20)]{W22} establishes that
\begin{gather}\label{Wang-OU}
\mathds{E}^{\mu}[\Tcal_2(\mu_T, \mu)] \lesssim
\begin{cases}
	T^{-\frac{2(\alpha - 1)}{(d - 2)\alpha + 2}} & \text {if \, $4(\alpha - 1) < d\alpha$,} \\
	T^{-1} \log(1 + T) & \text {if \, $4(\alpha - 1) = d\alpha$,} \\
	T^{-1} & \text {if \, $4(\alpha - 1) > d\alpha$.}
\end{cases}
\end{gather}
Our results here extend the findings of \cite{W22} to any $p>0$ and yield sharper bounds for $p=2$, provided that $1<\alpha<\alpha_c(d)$, where
\begin{gather*}
\alpha_c(d) :=
\begin{cases}
	\frac{6}{6-d} & \text {if \, $d<4$,}\\[4pt]
	\frac{d+2}{2} & \text {if \, $d \ge 4$.}
\end{cases}
\end{gather*}
	 Furthermore, let $\kappa=1$, $\alpha=2$ and $\varphi=0$ in \eqref{SDE-OU}. Then we get the classical Ornstein--Uhlenbeck process. Combining \eqref{S1} with \eqref{Wang-OU}, we obtain the best known bound for its empirical measure under the $\Wa_2$ distance.
\begin{corollary}
	Let $(X_t)_{t\ge0}$ be the $d$-dimensional Ornstein--Uhlenbeck process defined by
	\begin{gather*}
	\dup X_t = -X_t \, \dup t+  \, \dup W_t.
	\end{gather*}
	Then its empirical measures $\mu_T$ satisfy
	\begin{gather*}
	\Ee^\mu\left[ \Tcal_2(\mu_T,\mu)\right]
	\lesssim
	\begin{cases}
		T^{-1}			& \text{if\ \ } d = 1,\\
		T^{-1} \log T	& \text{if\ \ } d = 2,\\
		T^{-\frac 1 2} & \text{if\ \ } d = 3,\\
		T^{-\frac 1 2}\log T & \text{if\ \ }d = 4,\\
		T^{-\frac 2 d}		& \text{if\ \ }d \geq  5.
	\end{cases}
	\end{gather*}
\end{corollary}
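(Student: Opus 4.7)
The plan is to realize the classical Ornstein--Uhlenbeck process as a special case of the SDE \eqref{SDE-OU} with parameters $\kappa = 1$, $\alpha = 2$, and $\varphi \equiv 0$, and then draw the bound from the results already assembled in the paper. Two ingredients feed the final estimate: the upper bound \eqref{S1} from Theorem~\ref{S} applied with $p=2$, and the bound \eqref{Wang-OU} from \cite{W22} for the same parameter choice. Which of the two dominates depends on the dimension $d$, and the stated piecewise rate is simply the better of the two in each regime.

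First I would verify that the OU process satisfies assumption \textbf{(H3)} with a spectral gap. The invariant measure is the Gaussian $\mu(\dup x) = (2\pi)^{-d/2}\eup^{-|x|^2/2}\,\dup x$, whose log-density has Hessian $\mathds{I}_{d\times d}$; equivalently, the preceding example gives a Lyapunov function $J$ yielding the same conclusion with $\kappa=1$, $\alpha=2$, $\varphi=0$. In particular, \textbf{(H3)} holds, and $\mu(|\cdot|^q)<\infty$ for every $q>0$, so the hypotheses of Theorem~\ref{S} are met for all $q>2p=4$.

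Next I would apply \eqref{S1} with $p=2$. This directly yields the three rates in the corollary for $d\ge 3$: namely $T^{-1/2}$ when $d=3$ (case $p>d/2$), $T^{-1/2}\log T$ when $d=4$ (critical case $p=d/2$), and $T^{-2/d}$ when $d\ge 5$ (case $p<d/2$). Since the Gaussian has finite moments of all orders, no moment constraint is binding. In the low-dimensional cases $d=1$ and $d=2$, the exponent in \eqref{S1} is saturated at $1/2$, which is weaker than what is available from \eqref{Wang-OU}. Plugging $\alpha=2$ into \eqref{Wang-OU} produces the rate $T^{-1}$ for $d=1$ (the regime $4(\alpha-1)>d\alpha$) and $T^{-1}\log(1+T)$ for $d=2$ (the critical case $4(\alpha-1)=d\alpha$). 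Taking, in each dimension, the better of the two available bounds produces precisely the piecewise rate in the corollary.

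There is no real obstacle here; the corollary is essentially a bookkeeping step that selects, in each dimension, the sharper of the two rates at hand. The only minor care needed is checking that the constants coming from \eqref{S1} and \eqref{Wang-OU} do not conflict, which they do not since both are stated up to multiplicative constants depending only on the parameters of the process and the invariant measure.
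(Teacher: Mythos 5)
Your proposal is correct and matches the paper's own (very brief) proof, which consists precisely of the observation that the corollary follows by taking, in each dimension, the sharper of the bound \eqref{S1} from Theorem~\ref{S} (with $p=2$, applicable since $q>4$ is available because the Gaussian has all moments) and the bound \eqref{Wang-OU} with $\alpha=2$. Your dimensional bookkeeping — \eqref{Wang-OU} wins for $d=1,2$, the two coincide for $d=3$, and \eqref{S1} wins for $d\ge 4$ — is exactly right.
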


\subsection{Underdamped Langevin Dynamics}\label{Lan}

Let us consider the following underdamped Langevin dynamics $(X_t)_{t\ge0} = (Y_t, Z_t)_{t\ge0}$
on $\real^n\times \real^n$:
\begin{equation} \label{Langevin}
	\begin{cases}
		\dup Y_t = Z_t \, \dup t,\\
		\dup Z_t =- \big( Z_t + \nabla V(Y_t)  \big) \, \dup t + \sqrt{2 } \, \dup W_t,\\
	\end{cases}
\end{equation}
where $V \in C^2(\real^n)$ is  a confining potential,    and $(W_t)_{t\ge0}$ is the standard $n$-dimensional Brownian motion. Under mild assumptions (see \cite[Chapter~6]{Pa}),   the process $(X_t)_{t\ge0}$ defined by \eqref{Langevin} has a unique invariant probability measure given by
\begin{gather*}
\mu(\dup y, \dup z) : = \mu_V(\dup y) \, \Normal (\dup z) \in \mathscr P(\real^{2n}),
\end{gather*}
where $\Normal$ is the standard Gaussian measure on $\real^n$ and
\begin{gather*}
\mu_V (\dup y) := \frac{1}{Z_V} \eup^{-V(y)} \, \dup y, \qquad  Z_V : = \int_{\real^n} \eup^{-V(x)} \, \dup x.
\end{gather*}
 Let $(P_t)_{t\ge0}$ be the associated Markov semigroup of the process $(X_t)_{t\ge0}$, whose infinitesimal generator is
\begin{gather*}
\Lcal  =  \Delta_z - \langle  z + \nabla V(y), \nabla_z \rangle + \langle z, \nabla_y \rangle, \quad (y, z) \in \real^n \times \real^n.
\end{gather*}
In order to apply our main results to \eqref{Langevin}, we have to verify the assumptions \textbf{(H1)}-\textbf{(H3)}.

We first note that, in general, assumption \textbf{(H3)} is not expected to hold for this dynamics. Indeed, for any potential $V$ satisfying $\mu_V(|y|^2) < \infty$, denote $m_V: = \int_{\real^n} y\mu_V(\dup y) \in \real^n$. Let us consider the function $f(y, z) = \langle a, y - m_V\rangle$, where $a \in \real^n$ is a fixed nonzero vector. Clearly,  $f$ is in the (extended) domain of $\Lcal$ and
with $(\Lcal f)(y, z) = \langle a, z\rangle$. Moreover, it's obvious that $\mu(f) = \langle a, \mu_V(y - m_V)\rangle = 0$. Note that
\begin{gather*}
	\langle  \Lcal f, f \rangle _{L^2(\mu)}
	= \int_{\real^n} \langle a,y-m_V\rangle\,\mu_V(\dup y) \cdot \int_{\real^n} \langle a,z\rangle\, \Normal(\dup z)
	= 0;
\end{gather*}
thus \textbf{(H3)} does not hold.

The assumptions \textbf{(H1)} and \textbf{(H2)} are known to hold under suitable conditions on $V$, see \cite[Section 2]{Sc} and \cite{CLW}, respectively; a very recent work \cite{M} indicates validity of \textbf{(H2)} in the non-equilibrium case. Therefore, our results apply to the underdamped Langevin dynamics \eqref{Langevin}, and yield quantitative Wasserstein convergence rates for the empirical measures. From now on, we focus on the case where \textbf{(H2)} holds. In this setting, we may use the main theorem of \cite{CLW}. As in \cite{CLW}, we impose the following assumptions on $V\in C^2(\real^n)$:
\begin{enumerate}
	\item[(i)]
	Assume that the potential $V$ satisfies a Poincar\'e inequality: there exists a constant $c_1 > 0$ such that for any $f \in H^1( \mu_V)$ (this is the standard Sobolev space) with $\mu_V(f) = 0$,
	\begin{gather*}
		\mu_V(f^2) \leq c_1 \mu_V \left(|\nabla f|^2\right).
	\end{gather*}
	
	\item[(ii)]
	The potential satisfies $V \in C^2(\real^n)$, and there exist constants $c_2 \ge 1$ and $\delta \in (0, 1)$ such that for any $y \in \real^n$,
	\begin{align*}
		\sum_{i, j = 1}^{n} |\partial_{ij} V(y)|^2
		\le  c_2(1 + |\nabla V(y)|^2),\quad
		\Delta V(y) \leq  c_2   + \frac{\delta}{2} \, |\nabla V(y)|^2.
	\end{align*}
	
	\item[(iii)]
	The embedding $H^1(\mu_V) \hookrightarrow L^2(\mu_V)$ is compact.
\end{enumerate}
The assumption (i) implies the exponential integrability of Lipschitz functions with respect to $\mu_V$, see e.g.\ \cite[Section 4.4.2]{BGL14}, and therefore the measure $\mu = \mu_V \otimes \Normal$ has finite moments of any order. The assumption (iii) is satisfied, if
\begin{gather*}
	\lim_{|y| \to \infty} \frac{V(y)}{|y|^\beta} = \infty
\end{gather*}
for some $\beta > 1$, see \cite{CLW} for details. Under the above assumptions, Theorem~1 in \cite{CLW} yields \textbf{(H2$\mbox{}^\prime$)}, which implies \textbf{(H2)}, cf.\ the discussion in Section \ref{subsec-more}. Thus, we obtain the following corollary.

\begin{corollary} \label{lang}
	Assume that the potential $V$ satisfies \textup{(i)--(iii)}. Then, for the underdamped Langevin dynamics $(X_t)_{t\ge0}$ given by \eqref{Langevin} and any $p>0$, we have
	\begin{gather*}
		\Ee^\mu\left[\Tcal_p (\mu_T,\mu)\right]
		\lesssim
		\begin{cases}
			T^{-\frac 1 2}		& \text{if\ \ } p > \frac32 n,\\
			T^{-\frac 1 2}\log T	& \text{if\ \ } p = \frac32 n,\\
			T^{-\frac {p} {3n}}	& \text{if\ \ } p < \frac32 n,
		\end{cases}
	\end{gather*}
	and for any $\eta >1$,
	\begin{equation*}
		\limsup_{T \to \infty} \frac{\Tcal_p (\mu_T,\mu)}{T^{-\frac{p}{2(p + 2n)}} (\log T)^\eta}  < \infty \, \, \text{ a.s.}
	\end{equation*}
\end{corollary}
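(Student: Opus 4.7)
The plan is to apply Theorem~\ref{IP} and Theorem~\ref{thm3} with ambient dimension $d=2n$ (since the process lives on $\real^n\times\real^n$), after two preliminary verifications: that the invariant measure $\mu$ has finite $q$-th moment for every $q>0$, and that assumption \textbf{(H2)} holds for the underdamped Langevin dynamics. Once these are in place, the stated rates follow from Theorem~\ref{IP} \eqref{IP1} and from the first branch of the rate function \eqref{rate1} in Theorem~\ref{thm3}.

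First I would treat the moment bound. The product structure $\mu=\mu_V\otimes\Normal$ reduces the question to showing that $\mu_V$ has moments of all orders. Assumption (i) is a Poincar\'e inequality for $\mu_V$, which by the classical Aida--Stroock / Bobkov--G\"otze argument (see e.g.\ \cite[Section~4.4]{BGL14}) yields exponential integrability of Lipschitz functions, and in particular $\mu_V(|\cdot|^q)<\infty$ for all $q>0$. Since $\Normal$ has Gaussian tails, this gives $\mu(|\cdot|^q)<\infty$ for every $q>0$.

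Next I would establish \textbf{(H2)}. Under the combined assumptions (i)--(iii), \cite[Theorem~1]{CLW} provides the exponential variance decay $\Var_\mu(P_tf)\le Ce^{-2\lambda_{\mathrm{V}}t}\Var_\mu(f)$ for some $C\ge 1$, $\lambda_{\mathrm{V}}>0$, i.e.\ condition \textbf{(H2$\mbox{}^\prime$)} of Section~\ref{subsec-more}. By the chain of implications $\textbf{(H2$\mbox{}^\prime$)}\Rightarrow\textbf{(H2)}$ established there, the iterated Poincar\'e inequality holds for $\Lcal$.

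With both inputs in hand, the rest is routine. Applying Theorem~\ref{IP} with $d=2n$ and choosing any $q>4p$ (permissible since all moments are finite) places us in the regime~\eqref{IP1}; the critical dimension $\frac34 d$ becomes $\frac{3n}{2}$, and the three cases of \eqref{IP1} match the three cases stated in Corollary~\ref{lang} with exponent $\frac{2p}{3d}=\frac{p}{3n}$. For the almost sure bound I apply Theorem~\ref{thm3} with $d=2n$ and choose $q$ so large that $p+d<q/4$; we then land in the first branch of \eqref{rate1}, giving $R_\eta(T)=T^{-p/(2(p+2n))}(\log T)^\eta$, exactly the claimed rate. There is no genuine obstacle here: the work is entirely contained in Theorem~\ref{IP}, Theorem~\ref{thm3}, and the cited variance-decay result of \cite{CLW}; the corollary is a transcription of those statements into the $d=2n$ setting once \textbf{(H2)} and the moment condition have been secured.
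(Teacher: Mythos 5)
Your proposal is correct and follows essentially the same route as the paper: establish finiteness of all moments of $\mu$ from the Poincar\'e inequality (i) via exponential integrability of Lipschitz functions, obtain \textbf{(H2$'$)} from \cite[Theorem 1]{CLW} and hence \textbf{(H2)}, and then apply Theorem~\ref{IP} (taking $q>4p$) and Theorem~\ref{thm3} (taking $q>4(p+2n)$) with ambient dimension $d=2n$; the resulting exponents $\frac{3}{4}d=\frac{3n}{2}$, $\frac{2p}{3d}=\frac{p}{3n}$ and $\frac{p}{2(p+d)}=\frac{p}{2(p+2n)}$ indeed reproduce the stated rates.
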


To further illustrate our results, we consider \cite[Example 3.1]{W25} in the special case where $m = n$. The potential is given by
\begin{gather*}
	V(y) = \psi(y) + \left(1 + c |y|^2\right)^\theta, \quad y \in \real^n,
\end{gather*}
where $\psi \in C_b^2(\real^n)$, and constants $c > 0$ and $\theta> \frac12$.  Then, assumptions (i)-(iii) are satisfied (for the verification of (i), see \cite[Example 2.1]{W25}).  Therefore, for $p =2$, Corollary \ref{lang} yields
\begin{gather*}
	\Ee^\mu\left[\Tcal_2 (\mu_T,\mu)\right]
	\lesssim
	\begin{cases}
		T^{-\frac 1 2}		& \text{if\ \ } n = 1,\\
		T^{-\frac {2} {3n}}& \text{if\ \ } n \ge 2,
	\end{cases}
\end{gather*}
which improves the bound in \cite[(3.24)]{W25} in the case $m = n\ge 2$ and $\alpha =1$.

\begin{ack}

	R.L.\ Schilling is supported by the ScaDS.AI centre (TU Dresden, Universit\"at Leipzig).
	J.\ Wang is supported by the National Key R\&D Program of China (Grant No. 2022YFA1006003) and the NNSFC (Grant Nos. 12225104 and 12531007).
	B.\ Wu is supported by the National Key R \& D Program of China (Grant Nos.~2023YFA1010400 and 2022YFA1006003), the NNSFC (Grant No.~12401174), the  Natural Science Foundation-Fujian (Grant No.~2024J08051), Fujian Alliance of Mathematics (Grant No.~2023SXLMQN02), the Education and Scientific Research Project for Young and Middle-aged Teachers in Fujian Province of China (Grant No.~JAT231014) and the Alexander von Humboldt Foundation.
J.-X. Zhu acknowledges support from  the Natural Science Foundation of Shanghai (Grant No. 25ZR1402414) and the NNSFC (Grant Nos. 12271102 and 12501185).
\end{ack}

\frenchspacing

\vskip 0.3truein
{\bf Ren\'e L. Schilling:}\,\,
Institut f\"{u}r Mathematische Stochastik, Fakult\"{a}t Mathematik, TU Dresden,  Dresden 01062, Germany.
\newline
\quad\texttt{rene.schilling@tu-dresden.de}

\bigskip
{\bf Jian Wang:}\,\, School of Mathematics and Statistics \& Key Laboratory of Analytical Mathematics and Applications
 (Ministry of Education) \& Fujian Provincial Key Laboratory of Statistics and Artificial Intelligence, Fujian Normal University, Fuzhou 350007, China. \newline
\quad \texttt{jianwang@fjnu.edu.cn}

\bigskip
{\bf Bingyao Wu:}\,\,  School of Mathematics and Statistics \& Key Laboratory of Analytical Mathematics and Applications
 (Ministry of Education), Fujian Normal University, Fuzhou 350007, China; \ \

\emph{Current address}: Institut f\"{u}r Mathematische Stochastik, Fakult\"{a}t Mathematik, TU Dresden,  Dresden 01062, Germany. \newline
\quad\texttt{bingyaowu@163.com}

\bigskip
{\bf Jie-Xiang Zhu:}\,\, Department of Mathematics, Shanghai Normal University, Shanghai 200234, China. \newline
\quad\texttt{jiexiangzhu7@gmail.com}


\begin{thebibliography}{999}
\bibitem{AMB}
Ambrosio, L.;  Stra, F.;  Trevisan, D.: A PDE approach to a 2-dimensional matching problem. \emph{Probab. Theory Relat. Fields} \textbf{173} (2019), 433--477.



\bibitem {BGL14}
Bakry, D.; Gentil, I.; Ledoux, M.:
\emph{Analysis and Geometry of Markov Diffusion Operators}. Springer, Grundlehren Math.\ Wiss.\ \textbf{348}, Cham, 2014.

\bibitem{CLW}
Cao, Y.; Lu, J.; Wang, L.:
On explicit $L^2$-convergence rate estimate for underdamped Langevin dynamics.
\emph{Arch. Rat. Mech. Anal.} \textbf{247} (2023), Paper No.~90.


\bibitem{CFG}
Cattiaux, P.; Fathi, M.; Guillin, A.:
Self-improvement of the Bakry-Emery criterion for Poincar\'{e} inequalities and Wasserstein contraction using variable curvature bounds.
\emph{J. Math. Pures Appl.} \textbf{166} (2022) 1--29.




\bibitem{CP}
Chassagneux, J.-F.; Pag\`{e}s, G.:
A note on the $\Wa_2$--convergence rate of the empirical measure of an ergodic $\real^d$-valued diffusion.
\emph{ArXiv preprint} arXiv:2502.07704.

\bibitem{DSS}
Dereich, S.; Scheutzow, M.; Schottstedt, R.:
Constructive quantization: approximation by empirical measures.
\emph{Ann. Inst. Henri Poincar\'e Probab. Stat.} \textbf{49} (2013) 1183--1203.


\bibitem{DJL}  Du, K.;  Jiang Y.;  Li, J.: Empirical approximation to invariant measures for McKean-Vlasov processes: mean-field interaction vs self-interaction, \emph{Bernoulli} \textbf{29} (2023), 2492--2518.

\bibitem{E11}
Eberle, A.:
Reflection coupling and Wasserstein contractivity without convexity.
\emph{C. R. Math. Acad. Sci. Paris} \textbf{349} (2011) 1101--1104.


\bibitem{FG}
Fournier, N.; Guillin, A.:
On the rate of convergence in Wasserstein distance of the empirical measure.
\emph{Probab. Theory Related Fields} \textbf{162} (2015) 707--738.

\bibitem{HK}
Horowitz, J.; Karandikar, R.L.:
Mean rates of convergence of empirical measures in the Wasserstein metric.
\emph{J. Comput. Appl. Math.} \textbf{55} (1994) 261--273.


\bibitem{HL}
Huang, D.; Li, X.:
Bernstein-type inequalities for Markov chains and Markov processes: A simple and robust proof. \emph{Bernoulli} (to appear).
\emph{ArXiv preprint} arXiv:2408.04930.

\bibitem{HLM}
Huang, X.; Li, H.; Mu, L.:
Exponential ergodicity in $\Wa_1$ for SDEs with distribution dependent noise and partially dissipative drifts.
\emph{ArXiv preprint} arXiv:2411.14090.


\bibitem{L17}
Ledoux, M.:
	On optimal matching of Gaussian samples.
	\emph{Zap. Nauchn. Sem. S.-Peterburg. Otdel. Mat. Inst. Steklov. $($POMI$)$ {\rm457}, Veroyatnost'i Statistika.} \textbf{457} (2017) 226--264
	[Reprinted in \emph{Journal of Mathematical Sciences} \textbf{238} (2019) 495--522].







\bibitem{L}
Lezaud, P.:
Chernoff and Berry-Ess\'een inequalities for Markov processes.
\emph{ESAIM Probab. Statist.} \textbf{5} (2001) 183--201.


\bibitem{LW16}
Luo, D.; Wang, J.:
Exponential convergence in $L^p$-Wasserstein distance for diffusion processes without uniformly dissipative drift.
\emph{Math. Nachr.} \textbf{289} (2016) 1909--1926.


\bibitem{M}  Monmarch\'{e} P.: $L^2$ geometric ergodicity for the kinetic Langevin process with non-equilibrium steady states. \emph{Electron. Commun. Probab.} \textbf{30} (2025) 1--7.







\bibitem{Pa}
Pavliotis, G.A.:
\emph{Stochastic Processes and Applications: Diffusion Processes, the Fokker-Planck and Langevin Equations}. Texts in Applied Mathematics, vol.~{\bf60}, Springer, New York 2014.



\bibitem{Sc}
Schuh, K.:
Global contractivity for Langevin dynamics with distribution-dependent forces and uniform in time propagation of chaos.
\emph{Ann. Inst. Henri Poincar\'e Probab. Stat.} \textbf{60} (2024)  753--789.


\bibitem{TWZ}
Trevisan, D.; Wang, F.-Y.; Zhu,  J.-X.: Wasserstein asymptotics for empirical measures of diffusions
on four dimensional closed manifolds, \emph{Electron. Commun. Probab.} \textbf{30} (2025), Paper No. 68.



\bibitem{V-2003}
Villani, C.:
\emph{Topics in Optimal Transportation},
American Mathematical Society, Graduate Studies Math.\ \textbf{58}, Providence (RI), 2003.



\bibitem{W05}
Wang, F.-Y.:
\emph{Functional Inequalities, Markov Semigroups and Spectral Theory}.
Science Press, Beijing, 2005.

\bibitem{W20}
Wang, F.-Y.:
Exponential contraction in Wasserstein distances for diffusion semigroups with negative curvature.
\emph{Potential Anal.} \textbf{53} (2020) 1123--1144.



\bibitem{W22} Wang, F.-Y.: Wasserstein convergence rate for empirical measures on noncompact manifolds. \emph{Stoch. Proc. Appl.} \textbf{144} (2022) 271--287.

\bibitem{W25}
Wang, F.-Y.:
Wasserstein convergence rate for empirical measures of Markov processes.
\emph{Appl. Math. Optim.} \textbf{92} (2025) Paper No.~4.


\bibitem{WWZ}  Wang, F.-Y.; Wu, B.;  Zhu, J.-X.: Sharp $L^q$-convergence rate in $p$-Wasserstein distance for empirical measures of diffusion processes. \emph{ArXiv preprint} arXiv:2408.09116.


\end{thebibliography}
\end{document}